\newcommand\commentout[1]{}
\newtheorem{theorem}{Theorem}[section]
\newtheorem{corollary}[theorem]{Corollary}
\newtheorem{proposition}[theorem]{Proposition}
\newtheorem{lemma}[theorem]{Lemma}
\theoremstyle{remark}
\theoremstyle{definition}
\newtheorem{definition}[theorem]{Definition}
\title{Strengthening Relationships between Neural Ideals and Receptive Fields}
\author{Angelique Morvant}
\affil[]{}
\date{March 8, 2018}
\begin{document}
\maketitle

\begin{abstract}
Neural codes are collections of binary vectors that represent the firing patterns of neurons. The information given by a neural code $C$ can be represented by its neural ideal $J_C$. In turn, the polynomials in $J_C$ can be used to determine the relationships among the receptive fields of the neurons. In a paper by Curto et al., three such relationships, known as the Type 1-3 relations, were linked to the neural ideal by three if-and-only-if statements. Later, Garcia et al. discovered the Type 4-6 relations. These new relations differed from the first three in that they were related to $J_C$ by one-way implications. In this paper, we first show that the converses of these new implications are false at the level of both the neural ideal $J_C$ and the larger ideal $I(C)$ of a code. We then present modified statements of these relations that, like the first three, can be related by if-and-only-if statements to both $J_C$ and $I(C)$. Using the modified relations, we uncover a new relationship involving $J_C$, $I(C)$, and the Type 1-6 relations.
\end{abstract}


\section{Introduction} 

	One of the goals of neuroscience is to determine how the firing of neurons helps the brain understand its environment. 
Some neurons are observed to fire rapidly in response to particular stimuli; the set of such stimuli is known as the neuron's 
{\em receptive field}. One aim of the study of such neurons is to determine how their firing patterns encode the relationships
among their receptive fields. For example, place cells, discovered  in 1971 by  O'Keefe and Dostrovsky, fire more rapidly when an animal is 
in certain regions, allowing the animal to navigate its environment \cite{Place_cell_discovery}. In this case, the receptive field of a place cell 
is the spatial region in which it fires, and the firing patterns of the place cells allow the brain to construct a ``map'' of these regions. We want to
understand how the brain does this.

To this end, Curto, Itskov, Veliz-Cuba, and Youngs introduced the neural ideal \cite{Neural_Rings}. Firing patterns of neurons can be recorded as collections of binary vectors known as neural codes, and the neural ideal $J_C$ of a code $C$ is a polynomial ideal that contains the same information as the code itself.  Curto et al. showed that the presence of certain types of polynomials in the neural ideal gives information about the relationships among receptive fields \cite{Neural_Rings}. For example, consider two neurons that never fire at the same time. The corresponding neural code might be $C =  \{ [1,0], [0,1], [0,0] \}$, and this code is associated with the ideal $J_C = \langle x_1x_2 \rangle$. As we will see, the fact that $x_1x_2 \in J_C$ implies that the receptive fields of neuron 1 and neuron 2 do not overlap. So if $U_1$ is the receptive field of neuron 1 and $U_2$ is the receptive field of neuron 2, $U_1 \cap U_2 = \emptyset$ is a receptive field relation that can be read off from $J_C$; it is known as a Type 1 relation. Also, it can be shown that the presence of a Type 1 relation implies that $x_1x_2 \in J_C$. Thus, the relationship between $J_C$ and the Type 1 relation is if-and-only-if.

Curto et al. found three receptive field relationships, known as the Type 1-3 relationships, that can be read off from the neural ideal. The three if-and-only-if statements relating the Type 1-3 relationships to the polynomials in the neural ideal are also loosely called the Type 1-3 relations. Later, Garcia et al. discovered three more such relations, known as the Type 4-6 relations, but proved only one direction of the relations  \cite{Grobner_bases}. It is therefore natural to ask whether the converses of any of the Type 4-6 relations might also hold.

 In the sections that follow, we will prove that the answer is no for all three relations, but that we can modify the Type 4-6 relations to be if-and-only-if statements.   Like the original Type 1-3 relations, the modified versions of the Type 4-6 relations are if-and-only-if at the level of both $J_C$ and a larger ideal $I(C)$, called the ideal of $C$.  This suggests that, at least for the purposes of receptive field relations, the ideal and neural ideal of a code are interchangeable. 

The rest of this paper will be organized as follows. In Section 2, we will introduce the background and definitions needed in the rest of the work. In Section 3, we will show by counterexample that the converses of the Type 4-6 relations do not hold. In Section 4, we will present modified versions of these relations and prove that these modified relations are if-and-only-if. Finally, in Section 5, we will discuss the implications of our results and suggest topics for future research.


\section{Background}
In this section we introduce neural codes, pseudo-monomials, and neural ideals, as well as the prior results on which our work is based. Our notation matches that in \cite{Neural_Rings, Grobner_bases}. We begin with neural codes.
\begin{definition}
	A {\bf neural code} (or binary code) $C$ is a set of vectors in $\mathbb{F}_2^n$. A vector $c \in C$ is called a {\bf codeword}.
\end{definition}

	Each codeword $c$ in a neural code represents a firing pattern of $n$ neurons: the $i$th component of $c$ is 1 if neuron $i$ is firing and $0$ if it is not. For example, the neural code $C = \{ [0,1,1], [1,0,1], [0,0,1], [0,0,0]\}$ consists of the codewords $c_1 = [0,1,1]$, $c_2 = [1,0,1]$, $c_3 = [0, 0, 1]$, and $c_4 = [0,0,0]$. The codeword $c_1$ tells us that neurons 2 and 3 fire together while neuron 1 does not, $c_2$ tells us that neurons 1 and 3 fire together while neuron 2 does not, $c_3$ tells us that neuron 3 fires alone, and $c_4$ tells us that none of the neurons fire.

	 Alternatively, each codeword $c \in C$ can also be represented by the set $$\text{supp(}c\text{)} = \{i \in [n] \   | \   c_i  = 1 \},$$ where $[n] := \{1,2,...,n\}.$ So in the example above, $\text{supp(}c_1\text{)} = \{2,3\}$, $\text{supp(}c_2\text{)} = \{1,3\}$, $\text{supp(}c_3\text{)} = \{3\}$, and $\text{supp(}c_4\text{)} = \emptyset$. By dropping the set notation, we can write $C$ in shorthand as $C = \{\emptyset, 3, 13, 23\}$. 

	As mentioned in the previous section, neurons such as place cells fire in specific regions of a stimulus space known as receptive fields. For a (nonempty) stimulus space $X$, let $\mathcal{U} = \{U_1, U_2, ..., U_n\}$, where each $U_i \subseteq X$ is the receptive field of neuron $i$. Then we can define the associated receptive field code as 
$$C(\mathcal{U}) = \{  c \in \mathbb{F}_2^n \ | \ (\cap_{i \in \text{supp(}c\text{)}} U_i) \setminus (\cup_{j \notin \text{supp(}c\text{)}} U_j) \}.$$
 In addition, for any point $p \in X$, we will let $c(p)$ denote the codeword such that $\text{supp(}c(p)\text{)} = \{i \in [n] \ | \ p \in U_i \}$. Going back to our example, the code $C = \{\emptyset, 3, 13, 23\}$ is the receptive field code for the set $\mathcal{U} = \{ U_1, U_2, U_3 \}$ shown in Figure \ref{fig:Ex1}.

\begin{figure}[t]
\begin{center}
 \begin{tikzpicture}[scale=1.1]        
	\draw (-2,-2) rectangle (4,2); 
	\draw (1,0) circle (1.7cm) node at (1,1) {$U_3$};
	\draw (.15,0) circle (.6cm) node {$U_1$};
	\draw (1.85,0) circle (.6cm) node {$U_2$};	
           \node (X) at (3.6,1.6) {$X$};
\end{tikzpicture}   
\end{center}
\caption{Receptive fields associated with the code $C(\mathcal{U}) = \{\emptyset, 3, 13, 23\}$.}
\label{fig:Ex1}
\end{figure}
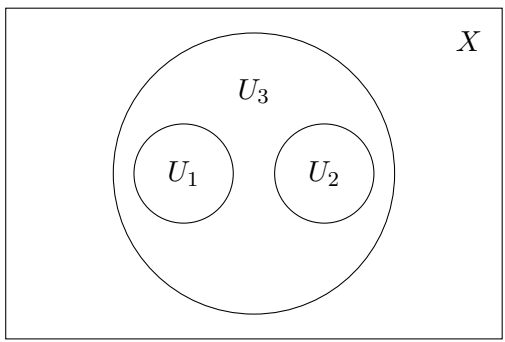

The information in a code on $n$ neurons can also be encoded in an ideal generated by pseudo-monomials in $\mathbb{F}_2[x_1,...x_n]$. 
\begin{definition}
	A {\bf pseudo-monomial} $f \in \mathbb{F}_2[x_1,...,x_n]$ is a polynomial with the form $$f = \prod_{i \in \sigma} x_i \prod_{j \in \tau} (1+x_j),$$
where $\sigma, \tau \in [n]$ and $\sigma \cap \tau = \emptyset$. 
\end{definition}
For any vector $v \in \mathbb{F}_2^n$, we define 
$$p_v = \prod_{i \in \text{supp(}v\text{)}} x_i \prod_{j \notin \text{supp(}v\text{)}}(1+x_j).$$ 
Such a pseudo-monomial is called the characteristic function for $v$ because $p_v(v) = 1$, but $p_v(x) = 0$ for all $x \in \mathbb{F}_2^n$ such that $x \neq v$. The neural ideal of a code is defined in terms of characteristic functions.

\begin{definition}
The {\bf neural ideal} $J_C$ of a code $C$ is the ideal generated by all $p_v \in \mathbb{F}_2[x_1, ...,x_n]$ such that $v \notin C$.  That is, 
$$J_C := \langle \{p_v \ | \ v \notin C \} \rangle.$$
\end{definition}
Notice that this implies that if $f \in J_C$, then $f(c) = 0$ for any $c \in C$. In \cite{Neural_Rings}, Curto et al. define the ideal of a code as follows:

\begin{definition}
Let $C \subseteq \mathbb{F}_2^n$ be a neural code. Then the {\bf ideal of $C$} is 
$$I(C) := \{f \in  \mathbb{F}_2[x_1,...,x_n] \ | \ f(c)=0 \ \forall c \in C \}.$$
\end{definition}
By this definition, $J_C \subseteq I(C)$. This fact will be important in later sections.
\par

	The polynomials in a neural ideal give information about the relationships among the receptive fields in a stimulus space.  The first three relations  (known as the Type 1-3 relations) were discovered by Curto, Itskov, Veliz-Cuba, and Youngs \cite{Neural_Rings}.  To simplify notation, we 
let 
$$x_{\sigma} = \prod_{i \in \sigma}x_i \text{ \ \ and \ \  } U_{\sigma} = \cap_{i \in \sigma} U_i$$
for any $\sigma \in [n]$. Note that if $\sigma = \emptyset$, then $x_{\sigma} = \prod_{i \in \sigma}x_i = 1$ and $U_{\sigma} = X$. \par

\begin{proposition}[Curto, Itskov, Veliz-Cuba, and Youngs] Let $X$ be a stimulus space, let $\mathcal{U} = \{U_i\}^n_{i=1}$ be a collection of sets in $X$, and consider the receptive field code $C = C(\mathcal{U})$. Then for any pair of subsets $\sigma, \tau \subseteq [n]$, we have the following receptive field relations:
\begin{itemize}[leftmargin=5em]
	\item [Type 1:] $x_{\sigma} \in J_C \Leftrightarrow U_{\sigma} = \emptyset$ (where $\sigma \neq \emptyset$). \par
	\item[Type 2:] $x_{\sigma} \prod_{i \in \tau} (1+x_i) \in J_C \Leftrightarrow U_{\sigma} \subseteq \cup_{i \in \tau} U_i$ (where $\sigma, \tau \neq \emptyset$). \par
	\item[Type 3:] $\prod_{i \in \tau}(1+x_i) \in J_C \Leftrightarrow X \subseteq \cup_{i \in \tau} U_i$ (where $\tau \neq \emptyset$), and thus $X = \cup_{i \in \tau} U_i$. \par
\end{itemize}
\end{proposition}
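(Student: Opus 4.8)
The plan is to treat all three relations uniformly, since each asserts the equivalence of a statement ``$g \in J_C$'' with a receptive-field condition, where $g = x_{\sigma} \prod_{i \in \tau}(1+x_i)$ (Type 1 being the case $\tau = \emptyset$, Type 3 the case $\sigma = \emptyset$, and Type 2 the general case). The engine driving every direction will be a single polynomial identity expressing $g$ as a sum of characteristic functions.

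First I would establish the identity
\[
x_{\sigma} \prod_{i \in \tau}(1+x_i) \;=\; \sum_{\substack{v \in \mathbb{F}_2^n \\ \sigma \subseteq \mathrm{supp}(v),\ \tau \cap \mathrm{supp}(v) = \emptyset}} p_v .
\]
Both sides are multilinear (each $x_i$ appears to degree at most one), so it suffices to check that they take the same value at every $w \in \mathbb{F}_2^n$. The left side evaluates to $1$ exactly when $w_i = 1$ for all $i \in \sigma$ and $w_i = 0$ for all $i \in \tau$, that is, when $\sigma \subseteq \mathrm{supp}(w)$ and $\tau \cap \mathrm{supp}(w) = \emptyset$; the right side, using $p_v(w) = \delta_{v,w}$, evaluates to $1$ under exactly the same condition. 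This gives the identity, and specializing $\tau = \emptyset$ or $\sigma = \emptyset$ recovers the forms needed for Types 1 and 3.

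Next I would record the dictionary between points and codewords: since $C = C(\mathcal{U})$, a vector $c$ lies in $C$ if and only if $c = c(p)$ for some $p \in X$, where $\mathrm{supp}(c(p)) = \{i : p \in U_i\}$. Under this correspondence the condition ``$\sigma \subseteq \mathrm{supp}(c)$ and $\tau \cap \mathrm{supp}(c) = \emptyset$ for some $c \in C$'' is exactly the existence of a point $p \in U_{\sigma} \setminus \bigcup_{i \in \tau} U_i$. Hence $U_{\sigma} \subseteq \bigcup_{i \in \tau} U_i$ holds if and only if no codeword $v \in C$ satisfies $\sigma \subseteq \mathrm{supp}(v)$ and $\tau \cap \mathrm{supp}(v) = \emptyset$, which is to say every $v$ appearing in the sum above has $v \notin C$.

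With these two ingredients the equivalences follow quickly in both directions. For ($\Leftarrow$), if the receptive-field condition holds then every $v$ indexing the sum satisfies $v \notin C$, so each $p_v \in J_C$ by definition of the neural ideal, and therefore $g$, being their sum, lies in $J_C$. For ($\Rightarrow$), I would use $J_C \subseteq I(C)$: if $g \in J_C$ then $g(c) = 0$ for all $c \in C$, and a point witnessing the failure of the receptive-field condition would produce a codeword $c(p)$ with $g(c(p)) = 1$, a contradiction. I expect the only delicate points to be verifying the polynomial identity carefully (the multilinearity argument, which is what lets me pass between vanishing on $\mathbb{F}_2^n$ and genuine polynomial equality) and keeping the boundary cases straight, namely $U_{\emptyset} = X$ and $x_{\emptyset} = 1$ when specializing to Types 1 and 3; the set-theoretic translation in the dictionary step is routine once the correspondence $c \leftrightarrow c(p)$ is in hand.
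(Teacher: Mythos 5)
The paper does not actually prove this proposition---it is quoted from Curto, Itskov, Veliz-Cuba, and Youngs \cite{Neural_Rings} and used as a known ingredient---so there is no in-paper argument to compare against; I can only assess your proof on its own terms. It is correct, and it is essentially the standard argument: the identity $x_{\sigma}\prod_{i\in\tau}(1+x_i)=\sum_{v}p_v$, summed over those $v$ with $\sigma\subseteq\mathrm{supp}(v)$ and $\tau\cap\mathrm{supp}(v)=\emptyset$, yields the backward directions directly from the generating set of $J_C$, while the forward directions follow from $J_C\subseteq I(C)$ by evaluating at $c(p)$ for a witness point $p$. The one thing you should make explicit is the hypothesis $\sigma\cap\tau=\emptyset$: your multilinearity argument needs it (for $i\in\sigma\cap\tau$ the left-hand side contains $x_i(1+x_i)$ and is not multilinear), and without it both the identity and the proposition itself fail --- for example, if $C=\mathbb{F}_2^2$ then $J_C=\{0\}$ and $x_1(1+x_1)\notin J_C$ even though $U_1\subseteq U_1$ holds trivially. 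This disjointness is built into the definition of a pseudo-monomial and into the original statement in \cite{Neural_Rings}, but since the proposition as transcribed here says ``any pair of subsets,'' you should state the assumption rather than leave it implicit. The boundary cases $x_{\emptyset}=1$ and $U_{\emptyset}=X$ are handled correctly.
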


In addition, Garcia et al. found three more receptive field relationships \cite[Theorem 5.1]{Grobner_bases}: 

\begin{proposition}[Garcia, Garc\'{i}a Puente, et al.]
	Let $\mathcal{U} = \{U_i\}^n_{i=1}$ be a collection of sets in a stimulus space $X$. Let $C = C(\mathcal{U})$ denote the corresponding receptive field code, and let $J_C$ denote the neural ideal. Then for any subsets $\sigma_1$, $\sigma_2$, $\tau_1$, $\tau_2 \subseteq [n]$, and $m$ indices $1 \leq i_i < i_2 < ... < i_m \leq n$, with $m \geq 2$, we have receptive field relationships as follows:
\begin{itemize}[leftmargin=5em]
	\item[Type 4:] $x_{\sigma_1} \prod_{i \in \tau_1} (1+x_i) + x_{\sigma_2} \prod_{j \in \tau_2} (1+x_j) \in J_C \Rightarrow U_{\sigma_1} \cap (\cap_{i \in \tau_1} U^c_i) = U_{\sigma_2} \cap (\cap_{j \in \tau_2} U^c_j)$. \par
	\item[Type 5:] $x_{i_1} + ... + x_{i_m} \in J_C \Rightarrow U_{i_k} \subseteq \cup_{j \in [m] \setminus  \{k\}} U_{i_j}$ for all $k = 1,...,m$, and if, additionally, $m$ is odd, then $\cap^m_{k=1} U_{i_k} = \emptyset$. \par
	\item[Type 6:] $x_{i_1} + ... + x_{i_m} + 1 \in J_C \Rightarrow \cup^m_{k=1} U_{i_k} = X$.
\end{itemize}
\end{proposition}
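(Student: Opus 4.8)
The plan is to reduce all three relations to a single evaluation principle and then read off each conclusion by elementary $\mathbb{F}_2$-arithmetic together with short set-theoretic arguments. The foundational observation is that for a receptive field code $C = C(\mathcal{U})$ the codewords are exactly $\{c(p) \mid p \in X\}$: given $p \in X$, the point $p$ lies in the region $(\cap_{i \in \mathrm{supp}(c(p))} U_i) \setminus (\cup_{j \notin \mathrm{supp}(c(p))} U_j)$ cut out by $c(p)$, so that region is nonempty and $c(p) \in C$; conversely every codeword's region, being nonempty, contains such a point. Combined with the remark that every $f \in J_C$ satisfies $f(c) = 0$ for all $c \in C$, this shows that any polynomial in $J_C$ vanishes at $c(p)$ for every $p \in X$.

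First I would establish the key evaluation lemma: for a pseudo-monomial $x_\sigma \prod_{i \in \tau}(1+x_i)$ and any $p \in X$,
$$\left(x_\sigma \prod_{i \in \tau}(1+x_i)\right)(c(p)) = 1 \iff p \in U_\sigma \cap \Big(\bigcap_{i \in \tau} U_i^c\Big),$$
with value $0$ otherwise. This follows coordinatewise, since $x_i(c(p)) = c(p)_i = 1 \iff p \in U_i$ and $(1+x_i)(c(p)) = 1 + c(p)_i = 1 \iff p \notin U_i$. Thus evaluation of a pseudo-monomial at $c(p)$ is precisely the indicator function $\chi(p)$, valued in $\{0,1\} \subseteq \mathbb{F}_2$, of the corresponding receptive-field region.

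With this in hand each relation is immediate. For \textbf{Type 4}, writing $A = U_{\sigma_1} \cap (\cap_{i \in \tau_1} U_i^c)$ and $B = U_{\sigma_2} \cap (\cap_{j \in \tau_2} U_j^c)$, membership in $J_C$ forces $\chi_A(p) + \chi_B(p) = 0$ in $\mathbb{F}_2$ for every $p \in X$, i.e. $\chi_A = \chi_B$ on all of $X$, whence $A = B$. For \textbf{Type 5}, $x_{i_1} + \cdots + x_{i_m} \in J_C$ forces $\sum_{k=1}^m \chi_{U_{i_k}}(p) = 0$ in $\mathbb{F}_2$, so every $p$ lies in an even number of the sets $U_{i_k}$; if $p \in U_{i_k}$ that count is at least $2$, producing some $j \neq k$ with $p \in U_{i_j}$ and hence the containment $U_{i_k} \subseteq \bigcup_{j \neq k} U_{i_j}$, while when $m$ is odd a point of $\cap_k U_{i_k}$ would lie in an odd number $m$ of the sets, contradicting evenness and forcing $\cap_k U_{i_k} = \emptyset$. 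For \textbf{Type 6}, the added constant $1$ flips the parity to odd, so every $p$ lies in at least one $U_{i_k}$, giving $X \subseteq \bigcup_k U_{i_k}$ and hence equality.

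The arguments are short; the two points requiring care are the evaluation lemma (in particular that $\mathbb{F}_2$-summation encodes the parity of set membership) and the passage from equal indicator functions on $X$ to equal subsets of $X$. The latter is exactly where the hypothesis that $C$ is a genuine receptive field code is essential: it guarantees that $X$ is exhausted by the regions and that every point yields a codeword, so vanishing on $C$ controls behavior at every $p \in X$. Beyond careful bookkeeping of these correspondences I expect no serious obstacle.
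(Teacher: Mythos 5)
Your proof is correct and matches the paper's approach: the paper states this proposition as a cited result of Garcia et al., but reproduces essentially the same argument when proving the forward directions of Theorem 4.2 (evaluating polynomials in the ideal at $c(p)$ for each $p \in X$ and reading off set membership from $\mathbb{F}_2$-parity). Your evaluation lemma and the observation that $f \in J_C$ vanishes at $c(p)$ for every $p \in X$ are exactly the mechanisms used there.
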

	
	Notice that, unlike the Type 1-3 relations, the Type 4-6 relations are not stated as if-and-only-if statements. That is, knowing the relations among receptive fields in a stimulus space, we cannot use the Type 4-6 relations in their current form to conclude anything about the associated neural ideal. It is therefore natural to wonder whether each of these statements can be reversed, and in Section 3 we show that in fact none of them can. However, in Section 4 we will present modified versions of the Type 4-6 relations that are if-and-only-if statements.


\section{Disproving the Converses of the Type 4-6 Relations}

	In this section, we show by counterexample that none of the converses of the Type 4-6 relations hold (Theorem 3.3). The converses of the relations are stated below:

\vspace{5mm}
{\em
Let $\mathcal{U} = \{ U_i \}_{i=1}^n$ be a collection of sets in a stimulus space $X$. Let $C = C(\mathcal{U})$ denote the corresponding receptive field code, and let $J_C$ be the neural ideal of $C$. Then for any subsets $\sigma_1$, $\sigma_2$, $\tau_1$, $\tau_2 \subseteq [n]$ and $m$ indices $1 \leq i_1 < i_2 < ... < i_m \leq n$, with $m \geq 2$, we have the following:  \par
\begin{itemize}[leftmargin=8em]
	\item[Converse of Type 4:] $U_{\sigma_1} \cap (\cap_{i \in \tau_1} U^c_i) = U_{\sigma_2} \cap (\cap_{j \in \tau_2} U^c_j) \Rightarrow x_{\sigma_1} \prod_{i \in \tau_1} (1+x_i) + x_{\sigma_2} \prod_{j \in \tau_2} (1+x_j) \in J_C$ for any subsets $\sigma_1$, $\sigma_2$, $\tau_1$, $\tau_2 \subseteq [n]$. \par
	\item[Converse of Type 5:] 
		For all $m \geq 2$ indices $1 \leq i_i < i_2 < ... < i_m \leq n$,
		\begin{enumerate}
			\item $m$ is even and $U_{i_k} \subseteq \cup_{j \in [m] \setminus  \{k\}} U_{i_j}$ for all $k = 1,...,m \Rightarrow x_{i_1} + ... + x_{i_m} \in J_C$, and 
			\item $m$ is odd, $U_{i_k} \subseteq \cup_{j \in [m] \setminus  \{k\}} U_{i_j}$  for all $k = 1,...,m$, and $\cap_{k=1}^m U_{i_k} = \emptyset \Rightarrow x_{i_1} + ... + x_{i_m} \in J_C$.
		\end{enumerate}
	
	\item[Converse of Type 6:] $ \cup^m_{k=1} U_{i_k} = X \Rightarrow x_{i_1} + ... + x_{i_m} + 1 \in J_C$  for all $m \geq 2$ indices $1 \leq i_i < i_2 < ... < i_m \leq n$.
\end{itemize}
}

Again, each of the converses is false. The receptive fields for all of the counterexample codes are shown in Figure \ref{fig:Counters}. In the case of the Type 5 and 6 relations, we will first use the counterexample codes to prove that the converses of these statements do not hold {\em even when $J_C$ is replaced by the larger ideal $I(C)$}. 

\begin{figure}[t!]
\centering
    \begin{subfigure}[b]{0.32\textwidth}
	\centering
   	 \begin{tikzpicture}[scale=0.80]
		\def\firstcircle{(0,0) circle (1.5cm)}
		\def\secondcircle{(0:2cm) circle (1.5cm)}      
            	\draw \firstcircle node[left] {$U_{1}$};   
            	\draw \secondcircle node[right] {$U_{2}$};
            	\draw (-2,-2) rectangle (4,2);
            	\node (X) at (3.6,1.6) {$X$};
	\end{tikzpicture}
        \caption{Counterexample for Type 4}
        \label{fig:T4C}
    \end{subfigure}
    \hfill
    \begin{subfigure}[b]{0.32\textwidth}
         \centering
         \begin{tikzpicture}[scale=0.80]
	       \def\firstcircle{(-0.5,0.4) circle (1.2cm)}
	       \def\secondcircle{(1.0,0.4) circle (1.2cm)}     
	       \def \thirdcircle{(3,-1.0) circle (0.7cm)} 
	       \def \keyrec{(-1.2,-2.6) rectangle (-.9,-2.3)}

	\begin{scope}[fill opacity=1]
		\fill[lightgray] \firstcircle;
		\fill[lightgray] \secondcircle;
    		\fill[lightgray] \thirdcircle;
		\fill[lightgray] \keyrec;
    	\end{scope}

                    \draw \firstcircle node[left] {$U_{2}$};   
                    \draw \secondcircle node[right] {$U_{3}$};
	         \draw \thirdcircle node {$U_4$};
                    \draw (-2,-2) rectangle (4,2);
	         \draw \keyrec;
                    \node (X) at (3.6,1.6) {$X$};
	         \node (Y) at (-.4,-2.5) {$U_1$};
	\end{tikzpicture}
        \caption{Counterexample for Type 5}
        \label{fig:T5C}
  \end{subfigure}
  \hfill
   \begin{subfigure}[b]{0.32\textwidth}
        \centering
        \begin{tikzpicture}[scale=0.80]         
	 \fill[lightgray] (0,-2) rectangle (2,2); 
	 \draw (-2,-2) rectangle (4,2); 
	 \draw (-2,-2) rectangle (0,2); 
	 \draw (2,-2) rectangle (4,2); 
            \node (X) at (3.6,1.6) {$X$};
	 \node (L1) at (-1,0) {$U_1$};
	 \node (L2) at (1,0) {$U_1 \cap U_2$};
	 \node (L3) at (3,0) {$U_2$};
       \end{tikzpicture}
        \caption{Counterexample for Type 6}
        \label{fig:T6C}
    \end{subfigure}	
\caption{Receptive fields for which the converses of the Type 4, 5, and 6 relations do not hold.}
\label{fig:Counters}
\end{figure}
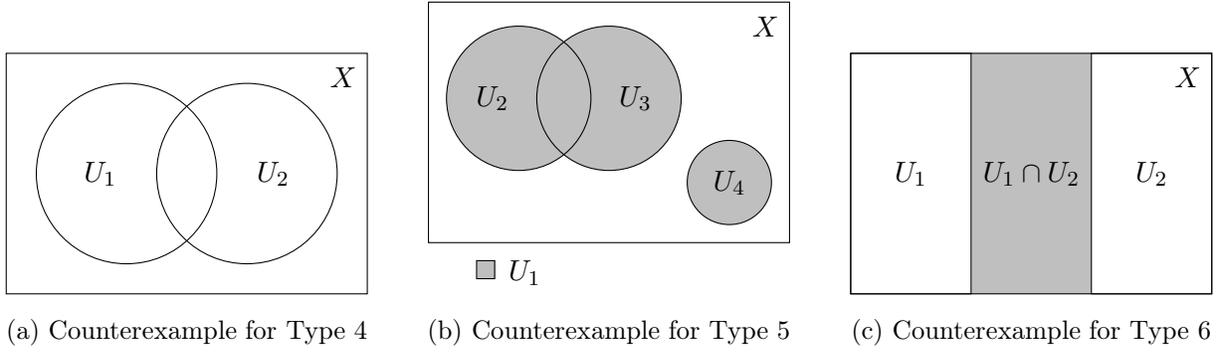

\begin{figure}[t!]
\centering
        \begin{tikzpicture}[scale=0.80]
	\def\firstcircle{(-0.5,0.4) circle (1.2cm)}
	\def\secondcircle{(1.0,0.4) circle (1.2cm)}     
	\def \thirdcircle{(2.9,-1.0) circle (0.9cm)} 
	\def \fourthcircle{(2.9,-1.3) circle (.4cm)}
	\def \keyrec{(-1.2,-2.6) rectangle (-.9,-2.3)}

	\begin{scope}[fill opacity=1]
		\fill[lightgray] \firstcircle;
		\fill[lightgray] \secondcircle;
    		\fill[lightgray] \thirdcircle;
		\fill[lightgray] \fourthcircle;
		\fill[lightgray] \keyrec;
    	\end{scope}

            \draw \firstcircle node[left] {$U_{2}$};   
            \draw \secondcircle node[right] {$U_{3}$};
	 \draw \thirdcircle node[above] {$U_4$};
	 \draw \fourthcircle node {$U_5$};
            \draw (-2,-2) rectangle (4,2);
	 \draw \keyrec;
            \node (X) at (3.6,1.6) {$X$};
	 \node (Y) at (-.4,-2.5) {$U_1$};
        \end{tikzpicture}
\caption{Receptive fields for which the converse of the Type 5 relation is false when $m$ is odd.}
\label{fig:Counter5Odd}
\end{figure}
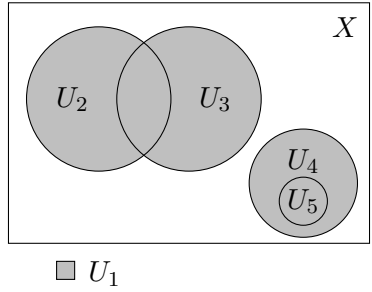

\begin{lemma}
For each of the following relations, there exists a collection of sets $\mathcal{U} = \{U_i\}^n_{i=1}$ in a stimulus space $X$ (with corresponding neural code $C = C(\mathcal{U})$) and $m \geq 2$ indices $1 \leq i_i < i_2 < ... < i_m \leq n$ such that the relation does {\bf not} hold: \par
\begin{itemize}[leftmargin=8em]
	\item[Converse of Type 5:] 
		\begin{enumerate}
			\item $m$ is even and $U_{i_k} \subseteq \cup_{j \in [m] \setminus  \{k\}} U_{i_j}$ for all $k = 1,...,m \Rightarrow x_{i_1} + ... + x_{i_m} \in I(C)$, and 
			\item $m$ is odd, $U_{i_k} \subseteq \cup_{j \in [m] \setminus  \{k\}} U_{i_j}$  for all $k = 1,...,m$, and $\cap_{k=1}^m U_{i_k} = \emptyset \Rightarrow x_{i_1} + ... + x_{i_m} \in I(C)$.
		\end{enumerate}
	\item[Converse of Type 6:]  $\cup^m_{k=1} U_{i_k} = X \Rightarrow  x_{i_1} + ... + x_{i_m} + 1 \in I(C)$
\end{itemize}
\end{lemma}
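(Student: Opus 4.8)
The plan is to exploit the inclusion $J_C \subseteq I(C)$ noted in Section 2: since any counterexample at the level of $I(C)$ is automatically one at the level of $J_C$, it suffices to produce, for each relation, an arrangement $\mathcal{U}$ whose receptive fields satisfy the hypotheses of the converse while the associated polynomial fails to lie in $I(C)$. Moreover, because $I(C) = \{f \mid f(c) = 0 \ \forall c \in C\}$, proving non-membership reduces to exhibiting a single codeword $c \in C$ at which the polynomial evaluates to $1$ in $\mathbb{F}_2$; for Type 5 this means a codeword whose support meets $\{i_1,\dots,i_m\}$ in an odd number of indices, and for Type 6 a codeword making $x_{i_1}+\dots+x_{i_m}+1$ equal to $1$.

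For Type 6 I would use the three-strip arrangement of Figure \ref{fig:T6C}, in which $X$ splits into the atoms $U_1 \setminus U_2$, $U_1 \cap U_2$, and $U_2 \setminus U_1$ with $U_1 \cup U_2 = X$. The covering hypothesis $U_1 \cup U_2 = X$ holds by construction, and reading off the atoms gives $C = \{1, 2, 12\}$ (crucially, there is no codeword $\emptyset$, precisely because the union is all of $X$). Evaluating $x_1 + x_2 + 1$ at the codeword $12$ yields $1 + 1 + 1 = 1 \neq 0$, so $x_1 + x_2 + 1 \notin I(C)$, as required.

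For Type 5 I would use Figures \ref{fig:T5C} and \ref{fig:Counter5Odd}, in each case taking $U_1$ to be exactly the union of the remaining receptive fields. In the even case ($m = 4$, indices $1,2,3,4$) I set $U_1 = U_2 \cup U_3 \cup U_4$ with $U_2, U_3$ overlapping and $U_4$ disjoint from both; then every covering condition $U_{i_k} \subseteq \cup_{j \neq k} U_{i_j}$ holds, since $U_2, U_3, U_4 \subseteq U_1$ while $U_1$ equals the union of the others, and the atoms give $C = \{\emptyset, 12, 13, 123, 14\}$. The codeword $123$ contains three of the four indices, so $x_1 + x_2 + x_3 + x_4$ evaluates to $1$ there, proving non-membership in $I(C)$. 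In the odd case ($m = 5$, indices $1,\dots,5$) I would additionally nest $U_5 \subseteq U_4$, again with $U_4$ disjoint from $U_2, U_3$, and set $U_1 = U_2 \cup U_3 \cup U_4$; the covering conditions again hold, and the disjointness of $U_4$ from $U_2 \cap U_3$ forces $\cap_{k=1}^5 U_{i_k} = \emptyset$, so both hypotheses of part (2) are met. The resulting code is $C = \{\emptyset, 12, 13, 123, 14, 145\}$, and the same codeword $123$ again gives $x_1 + \dots + x_5 = 1$, so the polynomial is not in $I(C)$.

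The bulk of the work, and the only place requiring care, is verifying that the chosen arrangements genuinely satisfy the hypotheses of each converse — in particular the covering inclusions, which I have arranged to hold automatically by making $U_1$ the union of the other fields, and the empty-intersection condition in the odd Type 5 case, which I force by keeping $U_4$ disjoint from $U_2 \cap U_3$ — together with reading the code correctly off the atoms of each arrangement. Once the code is determined, the non-membership is immediate from a single evaluation, since in every construction there is a codeword meeting the relevant index set in an odd number of positions. The point worth emphasizing is that these counterexamples already fail at the level of $I(C)$, not merely $J_C$, which is exactly what makes the statement stronger than the analogous one for $J_C$ alone.
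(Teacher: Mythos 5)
Your proposal is correct and uses exactly the same counterexample codes as the paper ($C=\{\emptyset,12,13,14,123\}$ and $C'=\{\emptyset,12,13,14,123,145\}$ for Type 5, $C=\{1,2,12\}$ for Type 6), with the same key step of evaluating the polynomial at a codeword whose support meets the index set in an odd number of positions. The only difference is cosmetic: for Type 6 you evaluate $x_1+x_2+1$ directly at the codeword $12$, whereas the paper reaches the same conclusion by a short contradiction argument via $(1+x_1)(1+x_2)\in J_C$; your direct evaluation is if anything slightly cleaner.
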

\begin{proof} 
	{\em Converse of Type 5:}  To disprove the converse of the Type 5 relation, it is enough to prove that the first implication is false. To this end, let $C = \{\emptyset, 12, 13, 14, 123 \}$. The corresponding stimulus space is shown in Figure \ref{fig:T5C}.  From the figure, we can see that $U_{i_k} \subseteq \cup_{j \in [4]\setminus \{k\}} U_{i_j}$, where $i_1=1,...,i_4=4$.  However, if $f := x_1+x_2+x_3+x_4$, then for any point $p \in (U_1 \cap U_2 \cap U_3) \setminus U_4$, $f(c(p)) = (x_1 + x_2 + x_3 + x_4)(c(p)) = 1+1+1+0 \equiv 1 \mod 2$, so $f=x_1+x_2+x_3+x_4 \notin I(C)$. \par
	There are also counterexamples for the second implication. For instance, the code $C' = \{\emptyset, 12, 13, 14, 123, 145\}$ (shown in Figure \ref{fig:Counter5Odd}) with $i_1= 1, ..., i_5=5$ satisfies $U_{i_k} \subseteq \cup_{j \in [5]\setminus \{k\}} U_{i_j}$ for all $k=1,...,5$, and $\cap_{k=1}^5 U_{i_k} = \emptyset$. But, again, if $g := x_1 + x_2 + x_3 + x_4+x_5$ and $p \in  (U_1 \cap U_2 \cap U_3) \setminus (U_4 \cup U_5)$, then $g(c(p)) = (x_1 + x_2 + x_3 + x_4+x_5)(c(p)) = 1+1+1+0+0 \equiv 1 \mod 2$. So $g \notin I(C')$.  \par
	{\em Converse of Type 6:} Now consider the code $C = \{1,2,12\}$ on 2 neurons pictured in Figure \ref{fig:T6C}. We compute that
$$J_C = \langle (1+x_1)(1+x_2) \rangle,$$
and we can see that $\cup_{i=1}^2 U_i = X$. If the converse of the Type 6 relation held, then it would be true that 
\begin{equation}
x_1+x_2+1 \in I(C).
\end{equation}
However, 
\begin{equation} 
(1+x_1)(1+x_2) = 1+x_1+x_2+x_1x_2 \in J_C \subseteq I(C).
\end{equation}
 Adding expressions (1) and (2) would then imply that $f:=x_1x_2 \in I(C)$. However, for a point $p \in U_1 \cap U_2$,  $f(c(p)) = 1\cdot 1 = 1$ and thus $f \notin I(C)$, a contradiction. So $x_1+x_2+1 \notin I(C)$.
\end{proof}

Now we are ready to show that none of converses of the Type 4-6 relations hold.

\begin{theorem}
For each of the following relations, there exists a collection of sets $\mathcal{U} = \{U_i\}^n_{i=1}$ in a stimulus space $X$ (with corresponding neural code $C = C(\mathcal{U})$) such that the relation does {\bf not} hold:  \par
\begin{itemize}[leftmargin=8em]
	\item[Converse of Type 4:] $U_{\sigma_1} \cap (\cap_{i \in \tau_1} U^c_i) = U_{\sigma_2} \cap (\cap_{j \in \tau_2} U^c_j) \Rightarrow x_{\sigma_1} \prod_{i \in \tau_1} (1+x_i) + x_{\sigma_2} \prod_{j \in \tau_2} (1+x_j) \in J_C$ for any subsets $\sigma_1$, $\sigma_2$, $\tau_1$, $\tau_2 \subseteq [n]$. \par
	\item[Converse of Type 5:] 
		For all $m \geq 2$ indices $1 \leq i_i < i_2 < ... < i_m \leq n$,
		\begin{enumerate}
			\item $m$ is even and $U_{i_k} \subseteq \cup_{j \in [m] \setminus  \{k\}} U_{i_j}$ for all $k = 1,...,m \Rightarrow x_{i_1} + ... + x_{i_m} \in J_C$, and 
			\item $m$ is odd, $U_{i_k} \subseteq \cup_{j \in [m] \setminus  \{k\}} U_{i_j}$  for all $k = 1,...,m$, and $\cap_{k=1}^m U_{i_k} = \emptyset \Rightarrow x_{i_1} + ... + x_{i_m} \in J_C$.
		\end{enumerate}
	
	\item[Converse of Type 6:] $ \cup^m_{k=1} U_{i_k} = X \Rightarrow x_{i_1} + ... + x_{i_m} + 1 \in J_C$  for all $m \geq 2$ indices $1 \leq i_i < i_2 < ... < i_m \leq n$.
\end{itemize}
\end{theorem}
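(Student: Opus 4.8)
The plan is to split the three claims into two groups according to how much new work each requires. The converses of the Type~5 and Type~6 relations have essentially already been settled by the preceding lemma, which exhibits collections $\mathcal{U}$ for which the relevant polynomials fail to lie in $I(C)$. Since $J_C \subseteq I(C)$ (as observed right after the definition of $I(C)$), any polynomial that is not in $I(C)$ is certainly not in $J_C$, and the set-theoretic hypotheses in the two statements are word-for-word identical. Hence the same codes $C = \{\emptyset, 12, 13, 14, 123\}$ and $C' = \{\emptyset, 12, 13, 14, 123, 145\}$ (for Type~5) and $C = \{1,2,12\}$ (for Type~6) used in the lemma serve verbatim as counterexamples for the Type~5 and Type~6 parts at the level of $J_C$. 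I would dispose of these two cases in a single sentence invoking the inclusion $J_C \subseteq I(C)$.

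The genuinely new content is the Type~4 converse, which the lemma does not touch and which I would prove using the two overlapping receptive fields of Figure~\ref{fig:T4C}. There every atom of the Venn diagram is nonempty, so the associated code is $C = \{\emptyset, 1, 2, 12\} = \mathbb{F}_2^2$; because no vector is excluded, the generating set of $J_C$ is empty and $J_C = \{0\}$ is the zero ideal. The converse of Type~4 would then force every Type~4 polynomial attached to a true set equality to be the zero polynomial, so to refute it I only need a single valid set equality $U_{\sigma_1}\cap(\cap_{i\in\tau_1}U_i^c) = U_{\sigma_2}\cap(\cap_{j\in\tau_2}U_j^c)$ whose polynomial $x_{\sigma_1}\prod_{i\in\tau_1}(1+x_i) + x_{\sigma_2}\prod_{j\in\tau_2}(1+x_j)$ is nonzero.

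To manufacture such a pair I would take $\sigma_1 = \tau_1 = \{1\}$ and $\sigma_2 = \tau_2 = \{2\}$. Then both sides of the set equation are $U_1 \cap U_1^c = \emptyset = U_2 \cap U_2^c$, so the hypothesis holds trivially, while the polynomial equals $x_1(1+x_1) + x_2(1+x_2) = x_1 + x_1^2 + x_2 + x_2^2$, which is visibly nonzero and therefore not in $J_C = \{0\}$. This finishes the Type~4 case and, together with the reduction above, the whole theorem.

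The step I expect to require the most care is the choice of subsets for Type~4 and the explanation of why the coincidence of regions cannot be arranged more innocently. In a generic two-disk picture the nine regions coming from pseudo-monomials with disjoint $\sigma$ and $\tau$ are pairwise distinct and nonempty, so no two honest pseudo-monomials describe the same region; the required equality of regions must instead be forced by permitting $\sigma \cap \tau \neq \emptyset$, which collapses each region to $\emptyset$. This is precisely the mechanism that keeps the counterexample alive at the level of $J_C$ while destroying it at the level of $I(C)$: each summand $x_k(1+x_k)$ vanishes on all of $\mathbb{F}_2^n$ and hence lies in $I(C)$, so the offending polynomial sits in $I(C)\setminus J_C$. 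I would flag this asymmetry explicitly, since it explains why the theorem asserts failure of the Type~4 converse only for $J_C$ (whereas the lemma obtained failure for Types~5 and~6 already at $I(C)$), and it foreshadows why the modified Type~4 relation of Section~4 can be made if-and-only-if for $J_C$ and $I(C)$ at once.
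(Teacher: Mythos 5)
Your proposal is correct and matches the paper's own proof essentially step for step: the Type 5 and 6 cases are dispatched by citing the lemma's counterexamples together with the inclusion $J_C \subseteq I(C)$, and the Type 4 case uses the same code $C=\{\emptyset,1,2,12\}$ with $\sigma_1=\tau_1=\{1\}$, $\sigma_2=\tau_2=\{2\}$, where $J_C=\langle 0\rangle$ but the polynomial $x_1(1+x_1)+x_2(1+x_2)$ is nonzero. Your closing remark on why the Type 4 counterexample survives only at the level of $J_C$ (since each summand vanishes as a function and hence lies in $I(C)$) is exactly the observation the paper makes immediately after the theorem.
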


\begin{proof}
We present a counterexample of each of the three statements, beginning with the converse of the Type 4 relation. The counterexamples for the Type 5 and 6 relations are the same as in the proof of Lemma 3.1. As we will see, we can use the same counterexamples for these two relations because $J_C \subseteq I(C)$. \par
	{\em Converse of Type 4:} Consider the code $C=\{\emptyset,1,2,12\}$ on 2 neurons pictured in Figure \ref{fig:T4C}. Also let $\sigma_1 = \tau_1 = 1$ and  $\sigma_2 = \tau_2=2$. Then $U_1  \cap U_1^c = \emptyset = U_2 \cap U_2^c$, but $J_C = \langle 0 \rangle$, so $x_1(1+x_1) + x_2(1+x_2) \notin J_C$. \par
	{\em Converse of Type 5:} Again let $C = \{\emptyset, 12, 13, 14, 123 \}$. The realization of this code is shown in Figure \ref{fig:T5C}.  From the proof of Lemma 3.1, we know that $x_1 + x_2 + x_3 + x_4 \notin I(C)$, so $x_1 + x_2 + x_3 + x_4 \notin J_C \subseteq I(C)$. This shows that the first implication is false. \par
	As before, however, we can also show that the second implication is false. The code $C' = \{\emptyset, 12, 13, 14, 123, 145\}$ again serves as a counterexample. By the proof of Lemma 3.1, $x_1 + x_2 + x_3 + x_4+x_5 \notin I(C')$, so $x_1 + x_2 + x_3 + x_4+x_5 \notin J_{C'} \subseteq I(C')$. \par
	{\em Converse of Type 6:} Finally, the proof of Lemma 3.1 tells us that for the code $C = \{1,2,12\}$ pictured in Figure \ref{fig:T6C}, $x_1+x_2+1 \notin I(C)$. Therefore, $x_1+x_2+1 \notin J_C \subseteq I(C)$.
\end{proof}

Notice that if we replace $J_C$ with $I(C)$ in the Type 4 relation, the counterexample we used in the proof of Theorem 3.2 no longer works since $x_1(1+x_1) + x_2(1+x_2) \in I(C)$. In fact, we will prove in the next section that the converse of the Type 4 relation is actually true when $J_C$ replaced with $I(C)$. This suggests that the Type 4-6 relations may be modified so that their converses hold. Possible modifications will be discussed in Section 4. 

 We finish this section by showing that the counterexample codes presented all have the smallest possible numbers of neurons.

\begin{theorem}
Let $\mathcal{U} = \{U_i\}_{i=1}^n$ be a collection of sets in a stimulus space $X$. Let $C = C(\mathcal{U})$ denote the corresponding receptive field code, and let $J_C$ denote the neural ideal. Then for any subsets $\sigma_1$, $\sigma_2$, $\tau_1$, $\tau_2 \subseteq [n]$, and $m$ indices $1 \leq i_1 < i_2 <...< i_m \leq n$, with $m \geq 2$, the following hold:
\begin{itemize}[leftmargin=8em]
	\item[Converse of Type 4:] When $n=1$, $U_{\sigma_1} \cap (\cap_{i \in \tau_1} U^c_i) = U_{\sigma_2} \cap (\cap_{j \in \tau_2} U^c_j) \Rightarrow x_{\sigma_1} \prod_{i \in \tau_1} (1+x_i) + x_{\sigma_2} \prod_{j \in \tau_2} (1+x_j) \in J_C$. \par
	\item[Converse of Type 5:]  When $n\leq 3$,
		\begin{enumerate}
			\item $m$ is even and $U_{i_k} \subseteq \cup_{j \in [m] \setminus  \{k\}} U_{i_j}$ for all $k = 1,...,m \Rightarrow x_{i_1} + ... + x_{i_m} \in J_C$, and 
			\item $m$ is odd, $U_{i_k} \subseteq \cup_{j \in [m] \setminus  \{k\}} U_{i_j}$  for all $k = 1,...,m$, and $\cap_{k=1}^m U_{i_k} = \emptyset \Rightarrow x_{i_1} + ... + x_{i_m} \in J_C$.
		\end{enumerate}

	\item[Converse of Type 6:]  When $n=1$, $\cup^m_{k=1} U_{i_k} = X \Rightarrow x_{i_1} + ... + x_{i_m} + 1 \in J_C$.
\end{itemize}
\end{theorem}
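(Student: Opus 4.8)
The plan is to treat the three converses separately, exploiting throughout that for the stated ranges of $n$ the admissible index tuples and subset configurations are too small to reproduce the obstructions constructed in the previous theorem. The converses of Type 5 and Type 6 at $n=1$ are immediate: each demands $m\ge 2$ indices with $1\le i_1<\cdots<i_m\le 1$, and $[1]=\{1\}$ admits no such tuple, so both statements are vacuously true. This also settles Type 5 at $n=1$, leaving for that relation only $n=2$ (which forces $m=2$) and $n=3$ (which forces $m\in\{2,3\}$).

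For the converse of Type 5 the engine is the polynomial identity
$$\sum_{k=1}^{m} x_{i_k} \;=\; \sum_{\substack{v\in\mathbb{F}_2^n\\ \sum_{i\in S}v_i\ \mathrm{odd}}} p_v,\qquad S=\{i_1,\dots,i_m\},$$
which I would justify by observing that both sides are multilinear and represent the same function $\mathbb{F}_2^n\to\mathbb{F}_2$, namely $v\mapsto\sum_{i\in S}v_i\bmod 2$; since a Boolean function has a unique multilinear representative, the two sides coincide as polynomials. It is crucial that this is an equality of \emph{polynomials} and not merely of functions, because $J_C$ (unlike $I(C)$) need not contain $x_i^2+x_i$, and the multilinearity of the $p_v$ is exactly what lets the identity survive inside $J_C$. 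Granting it, the converse reduces to showing that under the Type 5 hypotheses every $v$ with $\sum_{i\in S}v_i$ odd lies outside $C$, for then each $p_v$ on the right is a generator of $J_C$ and the whole sum, hence $\sum_k x_{i_k}$, lies in $J_C$. When $m\le 3$ the only odd $S$-weights are $1$ and (for $m=3$) $3$: a codeword of $S$-weight $1$, say with $v_{i_k}=1$ and $v_{i_j}=0$ for the remaining chosen $j$, would come from a point of $U_{i_k}\setminus\bigcup_{j\ne k}U_{i_j}$, empty by the hypothesis $U_{i_k}\subseteq\bigcup_{j\ne k}U_{i_j}$; and a codeword of $S$-weight $3$ would come from a point of $U_{i_1}\cap U_{i_2}\cap U_{i_3}$, empty by the additional hypothesis $\bigcap_k U_{i_k}=\emptyset$ of the odd case. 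Hence all odd-$S$-weight vectors are excluded, and the converse of Type 5 holds for $n\le 3$.

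For the converse of Type 4 at $n=1$ I would argue by finite enumeration. For $\sigma,\tau\subseteq\{1\}$ there are only four atoms $x_\sigma\prod_{i\in\tau}(1+x_i)$, namely $1,\ x_1,\ 1+x_1,\ x_1(1+x_1)$, with associated regions $X,\ U_1,\ U_1^c,\ \emptyset$; and the only receptive-field codes on one neuron are $\{1\}$, $\{\emptyset\}$, and $\{\emptyset,1\}$, giving $J_C=\langle 1+x_1\rangle$, $\langle x_1\rangle$, and $\langle 0\rangle$ respectively. The Type 4 expression is a sum of two atoms, so I would run through the (at most ten) unordered pairs: when the two atoms coincide the sum is $0\in J_C$; when the two regions cannot be equal (e.g.\ $U_1$ versus $U_1^c$, or $X$ versus $\emptyset$, impossible since $X\ne\emptyset$) the hypothesis is vacuous; and in the remaining cases the equality of regions pins down $C$, after which one checks membership directly. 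The only delicate point is the non-multilinear atom $x_1(1+x_1)$, for which membership relies on $x_1^2=x_1\cdot x_1\in\langle x_1\rangle$ and $(1+x_1)^2=1+x_1^2\in\langle 1+x_1\rangle$ over $\mathbb{F}_2$; this is precisely the phenomenon that fails once $n=2$ and that underlies the Type 4 counterexample.

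I expect the main obstacle to be the bookkeeping in the Type 5 case with $m=3$: correctly matching each receptive-field hypothesis to the codewords it forbids, and keeping track of the free coordinates outside $S$, while remembering that the key identity must be read as an equality of polynomials rather than of functions, so that the conclusion is valid for $J_C$ and not only for $I(C)$.
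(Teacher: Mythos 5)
Your proposal is correct, and its skeleton matches the paper's: the Type 5 and 6 statements at $n=1$ are dispatched as vacuous, Type 4 at $n=1$ is settled by finite enumeration of the one-neuron codes and their ideals, and Type 5 at $n\le 3$ is proved by summing the characteristic pseudo-monomials $p_v$ of the vectors that the hypotheses exclude from $C$. The genuine difference is in how that last step is organized. The paper treats $(n,m)=(2,2)$, $(3,2)$, $(3,3)$ separately, lists the forbidden codewords in each case, expands the corresponding $p_v$ by hand, and verifies that the cross terms cancel; you instead prove once the polynomial identity $\sum_{k=1}^{m} x_{i_k}=\sum_{v\,:\,\sum_{i\in S}v_i\ \mathrm{odd}} p_v$ via uniqueness of multilinear representatives of Boolean functions, reducing every case to the single combinatorial claim that no codeword of odd $S$-weight can occur. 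This buys a cleaner argument with no cancellation bookkeeping, a uniform treatment of the coordinates outside $S$ (which the paper handles by listing codewords such as $i_1i_3$ explicitly), and a structural explanation of where the converse fails at $m=4$, namely that odd $S$-weight $3$ is no longer excluded by the hypotheses; your insistence that the identity is an equality of polynomials rather than of functions is exactly what is needed to conclude membership in $J_C$ rather than merely in $I(C)$. One small point in your favor: your Type 4 enumeration includes the code $\{\emptyset\}$ arising from $U_1=\emptyset$, a legitimate one-neuron receptive field code that the paper's proof omits from its list, so your case analysis is if anything more complete than the published one.
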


\begin{proof}
	{\em Converse of Type 4:} For $n=1$, the possible neural codes are $C_1 = \{\emptyset, 1\}$ and $C_2 = \{1\}$. \par
In the case of $C_1$, $J_C = \langle 0 \rangle$, and the possible values for $U_{\sigma_1} \cap (\cap_{i \in \tau_1} U^c_i)$ are $U_1 \cap U_1^c = \emptyset$ (when $\sigma_1 = \tau_1 = 1$), $U_1 \cap X = U_1$ (when $\sigma_1=1$ and $\tau_1 = \emptyset$), $X \cap U_1^c = U_1^c$ (when $\sigma_1 = \emptyset$ and $\tau_1 = 1$), and $X \cap X = X$ (when $\sigma_1=\tau_1 = \emptyset$). Since the possible values of $ U_{\sigma_2} \cap (\cap_{j \in \tau_2} U^c_j)$ are also $\emptyset$, $U_1$, $U_1^c$, and $X$, and since none of these four sets are equal, $U_{\sigma_1} \cap (\cap_{i \in \tau_1} U^c_i) = U_{\sigma_2} \cap (\cap_{j \in \tau_2} U^c_j)$ only when $\sigma_1 = \sigma_2$ and $\tau_1 = \tau_2$. This implies that $x_{\sigma_1} \prod_{i \in \tau_1} (1+x_i) + x_{\sigma_2} \prod_{j \in \tau_2} (1+x_j) = x_{\sigma_1} \prod_{i \in \tau_1} (1+x_i) + x_{\sigma_1} \prod_{j \in \tau_1} (1+x_j)   = 0 \in J_C$. \par
	In the case of $C_2$, $J_C  = \langle 1+x_1 \rangle$, $U_1 = X$, and $U_1^c = \emptyset$. This means that there are two choices of $\sigma_1$ and $\tau_1$ for which $U_{\sigma_1} \cap (\cap_{i \in \tau_1} U^c_i) = \emptyset$: $\sigma_1 = \tau_1=1$, or $\sigma_1 = \emptyset$ and $\tau_1 = 1$. Similarly, $U_{\sigma_1} \cap (\cap_{i \in \tau_1} U^c_i) = X$ when either $\sigma_1=1$ and $\tau_1=\emptyset$, or $\sigma_1 = \tau_1 = \emptyset$. The same is true for  $U_{\sigma_2} \cap (\cap_{i \in \tau_2} U^c_i)$, so $U_{\sigma_1} \cap (\cap_{i \in \tau_1} U^c_i) = U_{\sigma_2} \cap (\cap_{j \in \tau_2} U^c_j)$ implies that we have one of the following cases: 
\begin{itemize}[leftmargin=8em]
	\item[{\em Case 1:}] $\sigma_1 = \sigma_2$ and $\tau_1=\tau_2$ \par
	\item[{\em Case 2:}] $\sigma_1 \neq \sigma_2$ and $\tau_1 = \tau_2 = \{1\}$
	\item[{\em Case 3:}] $\sigma_1 \neq \sigma_2$ and $\tau_1 = \tau_2 = \emptyset$
\end{itemize}

	In case 1, we can use the same reasoning we used when dealing with $C_1$, so it is enough to show that the converse of the Type 4 relation holds in cases 2 and 3. For these cases, we can assume without loss of generality that $\sigma_1 = 1$ and $\sigma_2 = \emptyset$. \par
In case 2, $$x_{\sigma_1} \prod_{i \in \tau_1} (1+x_i) + x_{\sigma_2} \prod_{j \in \tau_2} (1+x_j)  = x_1(1+x_1) + (1)(1+x_1) = (1+x_1)^2 \in J_C$$
because $1+x_1 \in J_C$. \par
Similarly, in case 3, $$x_{\sigma_1} \prod_{i \in \tau_1} (1+x_i) + x_{\sigma_2} \prod_{j \in \tau_2} (1+x_j) = x_1(1)+ (1)(1) = 1+x_1 \in J_C. $$ So the implication holds. \par
	{\em Converse of Type 5:} We consider the cases where $n=1$, $n=2$, and $n=3$ separately. \par
		In the case where $n=1$, we cannot have $m \geq 2$ indices , so the implication is vacuously true. \par
		When $n=2$, in order for the left-hand side of the implication to be true, we must have $m=2$, $U_1 \subseteq U_2$, and $U_2 \subseteq U_1$, which implies that $U_1 = U_2$. This means that neither 1 nor 2 is in the associated code $C$, so $x_1(1+x_2), x_2(1+x_1) \in J_C$. Therefore, $x_1(1+x_2)+ x_2(1+x_1) = x_1+x_2 \in J_C$ \par
	Finally, when $n=3$, we can have $m=2$ or $m=3$. If $m=2$, then $U_{i_1}=U_{i_2}$ as in the $n=2$ case. If we let $i_3$ be the remaining element of $\{1,2,3\}$, then $i_1$, $i_2$, $ i_1i_3$, $i_2i_3 \notin C$, which implies that the following pseudo-monomials are in $J_C$:
\begin{align*}
x_{i_1}(1+x_{i_2})(1+x_{i_3}) &= x_{i_1} + x_{i_1}x_{i_2} + x_{i_1}x_{i_3} + x_{i_1}x_{i_2}x_{i_3} \\
x_{i_2}(1+x_{i_1}) (1+x_{i_3}) &=  x_{i_2} + x_{i_1}x_{i_2} + x_{i_2}x_{i_3} + x_{i_1}x_{i_2}x_{i_3} \\
 x_{i_1}x_{i_3}(1+x_{i_2}) &= x_{i_1}x_{i_3} + x_{i_1}x_{i_2}x_{i_3}\\ 
 x_{i_2}x_{i_3}(1+x_{i_3}) &=  x_{i_2}x_{i_3} + x_{i_1}x_{i_2}x_{i_3}
\end{align*}
Adding these four terms together gives us $x_{i_1}+x_{i_2} \in J_C$. 

 	If $m=3$, similar reasoning applies, but we also must assume that $\cap_{k=1}^3 U_{i_k} = \emptyset$. We know that the codewords $1$, $2$, $3 \notin C$ since $U_{i_k} \subseteq \cup_{j \in [3] \setminus  \{k\}} U_{i_j}$ for all $k = 1,2,3$, and we know that $123 \notin C$ because $\cap_{k=1}^3 U_{i_k} = \emptyset$ by assumption. Therefore, the following are in $J_C$:
\begin{align*}
& x_1x_2x_3 \\
x_1(1+x_2)(1+x_3) &= x_{1} + x_{1}x_{2} + x_{1}x_{3} + x_{1}x_{2}x_{3} \\
x_2(1+x_1)(1+x_3) &=  x_{2} + x_{1}x_{2} + x_{2}x_{3} + x_{1}x_{2}x_{3} \\
x_3(1+x_1)(1+x_2) &= x_{3} + x_{1}x_{3} + x_{2}x_{3} + x_{1}x_{2}x_{3}. 
\end{align*}
 Adding the four terms again gives $x_1+x_2+x_3 \in J_C$. \par
	{\em Converse of Type 6:} Again, when $n=1$, it is impossible to have $m \geq 2$ different indices, so the implication is vacuously true.
\end{proof}


\section{Modifying the Type 4-6 Relations}

In this section, we discuss modified versions of the Type 4-6 relations and prove that their converses hold. The modifications are marked in bold in the following result.

\begin{theorem}
Let $\mathcal{U} = \{ U_i \}_{i=1}^n$ be a collection of sets in a stimulus space $X$. Let $C = C(\mathcal{U})$ denote the corresponding receptive field code, and let $J_C$ be the neural ideal of $C$. Then for any subsets $\sigma_1$, $\sigma_2$, $\tau_1$, $\tau_2 \subseteq [n]$ {\bf such that $\sigma_1 \cup \sigma_2$ and $\tau_1 \cup \tau_2$ are disjoint}, and $m$ indices $1 \leq i_1 < i_2 < ... < i_m \leq n$, with $m \geq 2$, we have the following equivalences:
\begin{itemize}[leftmargin=8em]
	\item[Modified Type 4:] $x_{\sigma_1} \prod_{i \in \tau_1} (1+x_i) + x_{\sigma_2} \prod_{j \in \tau_2} (1+x_j) \in J_C \Leftrightarrow U_{\sigma_1} \cap (\cap_{i \in \tau_1} U^c_i) = U_{\sigma_2} \cap (\cap_{j \in \tau_2} U^c_j)$
	\item[Modified Type 5:] $x_{i_1}+...+x_{i_m} \in J_C \Leftrightarrow$ {\bf for any $\sigma \subseteq [m]$ such that $|\sigma|$ is odd}, $\cap_{k \in \sigma} U_{i_k} \subseteq \cup_{j \in [m] \setminus \sigma} U_{i_j}$ \par
	\item[Modified Type 6:]  $x_{i_1} + ... + x_{i_m} + 1 \in J_C  \Leftrightarrow \cup^m_{k=1} U_{i_k} = X$ {\bf whenever $U_{i_1},...,U_{i_m}$ are pairwise disjoint}.
\end{itemize}
\end{theorem}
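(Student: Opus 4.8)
The plan is to deduce all three equivalences from one lemma that collapses the distinction between $J_C$ and $I(C)$ for square-free polynomials, and then to translate each membership condition into the corresponding receptive-field relation by a parity computation over the points of $X$.

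\emph{The backbone lemma.} First I would prove that for any \emph{multilinear} (square-free) $h \in \mathbb{F}_2[x_1,\dots,x_n]$, one has $h \in J_C$ if and only if $h(c) = 0$ for all $c \in C$. The forward direction is immediate from $J_C \subseteq I(C)$. For the converse, the key observation is that the characteristic functions $\{p_v : v \in \mathbb{F}_2^n\}$ form an $\mathbb{F}_2$-basis of the $2^n$-dimensional space of multilinear polynomials: they are square-free, $2^n$ in number, and linearly independent because $p_v(w) = \delta_{v,w}$. Hence every multilinear $h$ equals $\sum_{v} h(v)\, p_v$; if $h$ vanishes on $C$, the sum runs only over $v \notin C$, and each such $p_v$ is a generator of $J_C$, so $h \in J_C$. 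This lemma is exactly what upgrades the one-way Type 4--6 implications to equivalences, since the original forward directions used only $J_C \subseteq I(C)$.

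\emph{Reduction to points.} The disjointness hypotheses are chosen precisely so that the polynomials in question are square-free: requiring $\sigma_1 \cup \sigma_2$ and $\tau_1 \cup \tau_2$ to be disjoint forces $\sigma_i \cap \tau_i = \emptyset$, so both pseudo-monomials in Type 4 (and their sum) are multilinear, while $x_{i_1} + \dots + x_{i_m}$ and $x_{i_1} + \dots + x_{i_m} + 1$ are multilinear automatically. Next I would use that $C = C(\mathcal{U}) = \{c(p) : p \in X\}$, so that ``vanishes on $C$'' becomes a condition quantified over $p \in X$. Writing $S(p) = \{k : p \in U_{i_k}\}$, evaluation of $x_{i_1}+\dots+x_{i_m}$ at $c(p)$ records the parity of $|S(p)|$, while evaluation of a single pseudo-monomial $x_{\sigma_1}\prod_{i \in \tau_1}(1+x_i)$ at $c(p)$ is the indicator of the region $U_{\sigma_1} \cap (\cap_{i \in \tau_1} U_i^c)$. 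The three translations then run as follows. For Type 4, the vanishing of $x_{\sigma_1}\prod_{i \in \tau_1}(1+x_i) + x_{\sigma_2}\prod_{j \in \tau_2}(1+x_j)$ at every $c(p)$ says the indicators of $U_{\sigma_1} \cap (\cap_{i \in \tau_1} U_i^c)$ and $U_{\sigma_2} \cap (\cap_{j \in \tau_2} U_j^c)$ agree on all of $X$, i.e.\ the two regions coincide. For Type 5, membership means every point lies in an even number of the $U_{i_k}$; since a containment $\cap_{k \in \sigma} U_{i_k} \subseteq \cup_{j \in [m] \setminus \sigma} U_{i_j}$ fails exactly when some point $p$ has $S(p) = \sigma$, imposing it for every odd $\sigma$ is equivalent to forbidding all points with $|S(p)|$ odd. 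For Type 6, membership means every point lies in an odd number of the $U_{i_k}$, and pairwise disjointness forces $|S(p)| \in \{0,1\}$, so ``odd'' means ``exactly one,'' which is equivalent to $\cup_{k=1}^m U_{i_k} = X$.

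\emph{Main obstacle.} The conceptual crux is the backbone lemma: obtaining the reverse inclusion for square-free polynomials via the $p_v$-basis is what makes the three statements equivalences rather than implications, and it is also why square-freeness (rather than the full Boolean relations $x_i^2 = x_i$) is the hypothesis doing the work, consistent with the Section 3 counterexamples. Among the translations, the Type 5 step is the most delicate: I must check that ranging over all odd-cardinality $\sigma \subseteq [m]$ detects exactly the odd-parity points, using that the failure of each containment is witnessed by a point realizing precisely $S(p) = \sigma$ within the chosen index set, while membership in other $U_j$ with $j \notin \{i_1,\dots,i_m\}$ is irrelevant to the parity of $|S(p)|$.
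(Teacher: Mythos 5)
Your proposal is correct, but it takes a genuinely different route from the paper. The paper proves each reverse implication constructively: for Modified Type 5 and 6 it sums explicit pseudo-monomials supplied by the Type 1--3 relations and verifies by binomial/inclusion--exclusion counting that the sums telescope to $x_{i_1}+\dots+x_{i_m}$ (resp.\ $x_{i_1}+\dots+x_{i_m}+1$); for Modified Type 4 it sums the characteristic functions $p_c$ of the excluded codewords to get $x_{\sigma_1}+x_{\sigma_2}$ in the case $\tau_1\cup\tau_2=\emptyset$, and then transports this to the general case via the bit-flip maps of Jeffs--Omar--Youngs. Your backbone lemma --- that the $p_v$ form an $\mathbb{F}_2$-basis of the multilinear polynomials, so any multilinear $h$ equals $\sum_{v} h(v)p_v$ and hence lies in $J_C$ as soon as it vanishes on $C$ --- replaces all of this counting with a single structural fact (essentially that $J_C$ and $I(C)$ have the same multilinear elements), after which each equivalence reduces to a pointwise parity translation; your verifications of those translations (in particular that the odd-$\sigma$ containments in Type 5 fail exactly at points $p$ with $S(p)=\sigma$, hence collectively forbid odd $|S(p)|$) are sound, and the disjointness hypotheses are correctly identified as exactly what makes the Type 4 polynomial square-free. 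What your approach buys: it is shorter, proves both directions uniformly without invoking Proposition 2.6, and immediately yields the paper's Theorem 4.2 and Corollary 4.3 as corollaries of the same lemma, since the lemma is precisely the statement that $J_C$- and $I(C)$-membership coincide for these polynomials. What the paper's approach buys: explicit certificates of membership (the actual combinations of generators), and a demonstration of the bit-flip machinery that may be of independent interest.
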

\begin{proof}
	By Proposition 2.6, we know that the forward direction of each of the three implications is true, so it only remains to prove the reverse implications. \par
	{\em Modified Type 4:} Given a neural code $C$ that satisfies a relation of the form $U_{\sigma_1} \cap (\cap_{i \in \tau_1} U_i^c) = U_{\sigma_2} \cap (\cap_{j \in \tau_2} U_j^c) $, we will construct the desired sum by adding pseudo-monomials known to be in $J_C$. There are two cases: one where $\tau_1 \cup \tau_2$ is empty, and the remaining case where $\tau_1 \cup \tau_2$ is nonempty. We deal with each of these cases in turn. \par

 	 {\em Case 1 ($\tau_1 \cup \tau_2 = \emptyset$):} In this case, $U_{\sigma_1} = U_{\sigma_2}$ (we ignore the trivial cases where $\sigma_1$ or $\sigma_2$ is empty or where $\sigma_1 = \sigma_2$). Then any codeword $c$ for which $\sigma_1 \subseteq c$ but $\sigma_2 \not \subseteq c$, or $\sigma_2 \subseteq c$ but $\sigma_1 \not \subseteq c$, cannot be in $C$ (this would contradict the fact that $U_{\sigma_1} = U_{\sigma_2}$), and so $p_c = \prod_{i \in c} x_i \prod_{j \notin c}(1+ x_j) \in J_C$ for every such $c$. Letting  $C_1 = \{ c \subseteq [n] \mid \sigma_1 \subseteq c$ but $\sigma_2 \not \subseteq c \}$, $C_2 =\{ c \subseteq [n] \mid \sigma_2 \subseteq c$ but $\sigma_1 \not \subseteq c \}$, and $f = \sum_{c \in C_1 \cup C_2} p_c$, we claim that  $f = x_{\sigma_1} + x_{\sigma_2}$ and that $f \in J_C$ as desired. \par

 	 To see this, first note that $f \in J_C$ because each $p_c$ is in $J_C$ by the remarks above. To prove that $f = x_{\sigma_1}+x_{\sigma_2}$, we will consider $x_\sigma$ for all $\sigma \subseteq [n]$ and determine whether $x_{\sigma}$ appears as a summand in $f$. 

	First consider $x_\sigma$, where $\sigma_1 \not \subseteq \sigma$ and $\sigma_2 \not \subseteq \sigma$. Then $x_\sigma$ is not a term in the pseudo-monomial $p_c$ for any $c \in C_1 \cup C_2$ since all terms in such a pseudo-monomial must be of the form $x_{\sigma_1}x_{\sigma'}$ or  $x_{\sigma_2}x_{\sigma'}$ for some $\sigma' \subseteq [n]$. So $x_{\sigma}$ is not a term in $f$. \par

	If $\sigma_1 \subseteq \sigma$ but $\sigma_2 \not \subseteq \sigma$, then $x_\sigma$ is not a term in $p_c$ for any $c \in C_2$ or for any $c \in C_1$ such that $c \not \subseteq \sigma$. But $x_\sigma$ appears once as a term in $p_c$ for all $c \in C_1$ such that $c \subseteq \sigma$.  There are $2^m$ such $c$, where $m = |\sigma \setminus \sigma_1|$. Therefore, if $m \geq 1$, $x_\sigma$ will be added an even number of times in the expansion of $f$ and will therefore cancel out since we are working in $\mathbb{F}_2[x]$. We get  $m=0$ only when $\sigma = \sigma_1$, which tells us that $x_{\sigma_1}$ is a monomial term in $f$. Similar reasoning applies when $\sigma_2 \subseteq \sigma$ but $\sigma_1 \not \subseteq \sigma$. \par

	 Finally, if $\sigma_1 \cup \sigma_2 \subseteq \sigma$, then $x_\sigma$ is a term in $p_c$ for any $c \in C_1 \cup C_2$ such that $c \subseteq \sigma$. As before, we need to determine how many $c \in C_1 \cup C_2$ are contained in $\sigma$. We know there are $2^m$ codewords $c$ such that $\sigma_1 \subseteq c \subseteq \sigma$ and  $2^k$ codewords $c'$ such that $\sigma_2 \subseteq c' \subseteq \sigma$, where $m = |\sigma \setminus \sigma_1 | \geq 1$ and $k = |\sigma \setminus \sigma_2| \geq 1$ \footnote{Note that if $m =0$, then $\sigma_1 \cup \sigma_2 \subseteq \sigma = \sigma_1$, which would imply that  $\sigma_1 = \sigma_2$. We can ignore this case because in this case $x_{\sigma_1} + x_{\sigma_2} = 0$, and $0 \in J_C$ for all $C$.}. However, if we count all such $c$ and $c'$, we will also count twice the $2^l$ codewords $c''$ such that $\sigma_1\cup \sigma_2 \subseteq c'' \subseteq \sigma$. (Here $l = |\sigma \setminus (\sigma_1 \cup \sigma_2)| \geq 0$.) These codewords $c''$ are not in $C_1 \cup C_2$ and therefore must be excluded twice from our count of codewords in $C_1 \cup C_2$ contained in $\sigma$. So there are $2^m + 2^k - 2(2^l)$ codewords contained in $\sigma$ that are in $C_1 \cup C_2$. This implies that $x_\sigma$ will be added $2^m + 2^k - 2(2^l)$ times in $f$ and therefore will cancel out since $2^m + 2^k - 2(2^l)$ is even. \par
	Thus, the only terms that appear in $f$ are $x_{\sigma_1}$ and $x_{\sigma_2}$, so $f = x_{\sigma_1} + x_{\sigma_2} \in J_C$ as desired. \par

	{\em Case 2 ($\tau_1 \cup \tau_2 \neq \emptyset$):} In the case where $\tau_1 \cup \tau_2 \neq \emptyset$ and $U_{\sigma_1} \cap (\cap_{i \in \tau_1} U_i^c) = U_{\sigma_2} \cap (\cap_{j \in \tau_2} U_j^c) $, we can apply the same reasoning used in the previous case if we first define a new stimulus space $\{U_i'\}$ (with receptive field code $C'$) as follows: 
\begin{equation} \label{eqn:U_prime}
       U_i' = 
       \begin{cases}
            U_i & \text{if $i \not \in \tau_1 \cup \tau_2$} \\
            U_i^c & \text{if $i \in \tau_1 \cup \tau_2$.} \\
        \end{cases}
\end{equation}
 
Because $\sigma_1 \cup \sigma_2$ and $\tau_1 \cup \tau_2$ are disjoint and $U_{\sigma_1} \cap (\cap_{i \in \tau_1} U_i^c) = U_{\sigma_2} \cap (\cap_{j \in \tau_2} U_j^c) $, we know that $U_{\sigma_1} '\cap (\cap_{i \in \tau_1} U_i') = U_{\sigma_2}' \cap (\cap_{j \in \tau_2} U_j')$ as well. Therefore, we can say from the first case that $x_{\sigma_1 \cup \tau_1} + x_{\sigma_2 \cup \tau_2} \in J_{C'}$. \par

	Now we need to show that the corresponding pseudo-monomial $x_{\sigma_1} \prod_{i \in \tau_1}(1+x_i) + x_{\sigma_2}\prod_{j \in \tau_2}(1+x_j)$ is in $J_C$. Consider the $i$-th {\em bit flip map} $\delta_i$ : $\mathbb{F}_2[x_1,...,x_n] \rightarrow \mathbb{F}_2[x_1,...,x_n]$ given by 
	
\begin{equation}
       \delta_i (x_j) = 
       \begin{cases}
            x_j & \text{if $j \neq i$} \\
            1+x_j & \text{if $i=j$} \\
        \end{cases}
\end{equation}

as defined in Jeffs, Omar, and Youngs \cite{Neural_homo}, and let $\delta_{\tau_1 \cup \tau_2}$ be the composition of all $\delta_i$ such that $i \in \tau_1 \cup \tau_2$. Also as in \cite{Neural_homo}, for a neural code $C$ we define the following code:
$$\delta_i(C) := \{ u \in \mathbb{F}_2^n \mid supp(u) = supp(c) \oplus  \{i\} \text{ for some } c \in C\},$$ 
where $\oplus$ denotes the symmetric difference. Then by Theorem 2.13 in \cite{Neural_homo}, we know that $\delta_i(J_{C'}) = J_{\delta_i(C')}$ for each $i \in \tau_1 \cup \tau_2$, so $\delta_{\tau_1 \cup \tau_2} (J_{C'}) = \delta_{i_1} \circ \delta_{i_2} \circ ... \circ \delta_{i_m}(J_{C'}) = J_{\delta_{i_1} \circ \  \delta_{i_2} \circ ... \circ \  \delta_{i_m}(C')} = J_{\delta_{\tau_1 \cup \tau_2}(C')}$ for $i_1, i_2, ... i_m \in \tau_1 \cup \tau_2$. But from (\ref{eqn:U_prime}), we can see that $\delta_{\tau_1 \cup \tau_2}$ maps $C'$ into $C$, and because $\delta_{\tau_1 \cup \tau_2}(\delta_{\tau_1 \cup \tau_2}(c)) = c$ for any $c \in C'$, $\delta_{\tau_1 \cup \tau_2}$ is a bijection, which implies that $J_{\delta_{\tau_1 \cup \tau_2}(C')} = J_C$. Since each $\delta_i$ is a homomorphism \cite{Neural_homo}, $\delta_{\tau_1 \cup \tau_2}$ is also a homomorphism from $J_{C'}$ to $J_C$, so $\delta_{\tau_1 \cup \tau_2}(x_{\sigma_1 \cup \tau_1}+x_{\sigma_ \cup \tau_2}) = x_{\sigma_1} \prod_{i \in \tau_1}(1+x_i) + x_{\sigma_2}\prod_{j \in \tau_2}(1+x_j) \in J_C$. \par

	{\em Modified Type 5:} Suppose that for any $\sigma \subseteq [m]$ such that $|\sigma|$ is odd, $\cap_{k \in \sigma} U_{i_k} \subseteq \cup_{j \in [m] \setminus \sigma} U_{i_j}$. Then by the Type 2 relation, $\prod_{k \in \sigma} x_{i_k} \prod_{j \in [m] \setminus \sigma} (1+x_{i_j}) \in J_C$ for any such $\sigma$. So if $S$ is the set of all $\sigma \subseteq [m]$ such that $|\sigma|$ is odd,  then
$$f := \sum_{\sigma \in S} \Big (\prod_{k \in \sigma} x_{i_k} \prod_{j \in [m] \setminus \sigma} (1 + x_{i_j}) \Big ) \in J_C.$$ 
We claim that  $f = x_{i_1} + ... + x_{i_m}$. To see this, consider the monomial $\prod_{k \in \tau} x_{i_k}$, where $\tau \subseteq [m]$. (Note that such terms are the only possible monomials that can appear in the expansion of $f$.) We know that $\prod_{k \in \tau} x_{i_k}$ will appear exactly once in the expansion of every summand $\prod_{k \in \sigma} x_{i_k} \prod_{j \in [m] \setminus \sigma} (1 + x_{i_j})$ for which $\sigma \subseteq \tau$ and $\sigma \in S$, and never in the expansions of the other summands. Call the number of such summands $N_{\tau}$. If $|\tau| = 1$, then the only odd-sized subset of $\tau$ is $\tau$ itself, so the term $\prod_{k \in \tau} x_{i_k}$ must appear exactly once in the expansion of $f$. That is, $N_{\tau} = 1$. If $|\tau| \geq 2$, then $N_{\tau} = \sum_{1\leq i \leq |\tau|,~ i~odd} \binom{|\tau|}{i} = 2^{|\tau| -1}$, and since $|\tau| \geq 2$, $N_{\tau}$ must be even. This means that $\prod_{k \in \tau} x_{i_k}$ cancels out of $f$ when $|\tau| \geq 2$,  so the only terms that appear in the expansion of $f$ are $x_{i_1}, ..., x_{i_m}$, which implies that $f = x_{i_1} + ... + x_{i_m}$. \par

	{\em Modified Type 6:} Finally, suppose that $\cup_{k=1}^m U_{i_k} = X$ and that $U_{i_1},...,U_{i_m}$ are pairwise disjoint. Also let $M = \{i_1,...,i_m\}$. Then by the Type 3 relation we know that $\prod_{i \in M}(1+x_i) \in J_C$. But  $\prod_{i \in M}(1+x_i) = \sum_{\sigma \subseteq M} x_{\sigma}$, and since $U_{i_1},...,U_{i_m}$ are pairwise disjoint, $x_{\sigma} \in J_C$ for all $\sigma \subseteq M$ such that $|\sigma| \geq 2$ by the Type 1 relation. Therefore,
$$\sum_{\sigma \subseteq M} x_{\sigma} + \sum_{\sigma \subseteq M~,~|\sigma| \geq 2} x_{\sigma} ~=~ \sum_{\sigma \subseteq M~,~|\sigma| < 2} x_{\sigma}~=~1+x_{i_1}+...+x_{i_m} \in J_C.$$
\end{proof}

Recall that in the previous section we showed that the converses of the original Type 5 and 6 relations are false when $J_C$ is replaced by $I(C)$. However, it turns out that the converses of the modified Type 4-6 relations all hold when $J_C$ is replaced by $I(C)$.

\begin{theorem}
Let $\mathcal{U} = \{ U_i \}_{i=1}^n$ be a collection of sets in a stimulus space $X$. Let $C = C(\mathcal{U})$ denote the corresponding receptive field code, and let $I(C)$ be the ideal of $C$. Then for any subsets $\sigma_1$, $\sigma_2$, $\tau_1$, $\tau_2 \subseteq [n]$ such that $\sigma_1 \cup \sigma_2$ and $\tau_1 \cup \tau_2$ are disjoint, and $m$ indices $1 \leq i_1 < i_2 < ... < i_m \leq n$, with $m \geq 2$, we have the following equivalences:
\begin{itemize}[leftmargin=8em]
	\item[Modified Type 4:]  $x_{\sigma_1} \prod_{i \in \tau_1} (1+x_i) + x_{\sigma_2} \prod_{j \in \tau_2} (1+x_j) \in I(C) \Leftrightarrow U_{\sigma_1} \cap (\cap_{i \in \tau_1} U^c_i) = U_{\sigma_2} \cap (\cap_{j \in \tau_2} U^c_j)$
	\item[Modified Type 5:] $x_{i_1}+...+x_{i_m} \in I(C) \Leftrightarrow$ for any $\sigma \subseteq [m]$ such that $|\sigma|$ is odd, $\cap_{k \in \sigma} U_{i_k} \subseteq \cup_{j \in [m] \setminus \sigma} U_{i_j}$ \par
	\item[Modified Type 6:]  $x_{i_1} + ... + x_{i_m} + 1 \in I(C)  \Leftrightarrow \cup^m_{k=1} U_{i_k} = X$ whenever $U_{i_1},...,U_{i_m}$ are pairwise disjoint.
\end{itemize}
\end{theorem}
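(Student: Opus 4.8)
The plan is to derive Theorem 4.2 from Theorem 4.1 together with the containment $J_C \subseteq I(C)$, treating the two directions of each equivalence separately. The guiding observation is that the work splits cleanly: the reverse implications (a receptive field relation forcing polynomial membership) were already proved for $J_C$ in Theorem 4.1 and pass to $I(C)$ at once, whereas the forward implications (polynomial membership forcing a relation) are evaluation arguments that use only the vanishing of the polynomial on $C$ --- which is exactly the defining property of $I(C)$.

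For the reverse direction of each of the three modified relations, I would argue uniformly. Suppose the right-hand receptive field relation holds, under the same standing hypotheses as in Theorem 4.1 (that $\sigma_1 \cup \sigma_2$ and $\tau_1 \cup \tau_2$ are disjoint in the Type 4 case, and that $U_{i_1}, \ldots, U_{i_m}$ are pairwise disjoint in the Type 6 case). Then Theorem 4.1 places the corresponding polynomial in $J_C$, and since $J_C \subseteq I(C)$ the polynomial lies in $I(C)$ as well. Thus all three reverse implications follow with no additional work.

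For the forward direction I would use that $f \in I(C)$ means precisely $f(c) = 0$ for every $c \in C$, combined with the fact that every codeword of $C = C(\mathcal{U})$ is $c(p)$ for some point $p \in X$. For the modified Type 4 relation, set $A = U_{\sigma_1} \cap (\cap_{i \in \tau_1} U_i^c)$ and $B = U_{\sigma_2} \cap (\cap_{j \in \tau_2} U_j^c)$; each pseudo-monomial evaluates to $1$ at $c(p)$ exactly when $p$ lies in $A$, respectively $B$, and to $0$ otherwise, so $f(c(p))$ records whether $p$ lies in exactly one of $A, B$, and vanishing of $f$ on every $c(p)$ forces $A = B$. For the modified Type 5 relation, fix an odd-sized $\sigma \subseteq [m]$; a point $p \in (\cap_{k \in \sigma} U_{i_k}) \setminus (\cup_{j \in [m] \setminus \sigma} U_{i_j})$ would give $f(c(p)) = |\sigma| \equiv 1 \pmod{2}$, contradicting $f \in I(C)$, so no such $p$ exists and the desired containment holds. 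For the modified Type 6 relation, a point $p \notin \cup_{k=1}^m U_{i_k}$ would give $f(c(p)) = 1$, again a contradiction, whence $\cup_{k=1}^m U_{i_k} = X$.

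The only step requiring genuine care --- and the reason there is ultimately no obstacle --- is verifying that these forward arguments call on nothing beyond membership in $I(C)$. In particular, none of them uses the explicit generators of $J_C$ or the disjointness hypotheses; those hypotheses enter only through the reverse direction, inherited from Theorem 4.1. Indeed, the Type 4 forward computation shows that $f \in I(C) \Leftrightarrow A = B$ outright, so for $I(C)$ even the reverse Type 4 implication could be obtained by evaluation alone. Once this is checked type by type, the three equivalences for $I(C)$ follow, completing the proof.
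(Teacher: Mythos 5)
Your proposal is correct and matches the paper's own proof: the reverse implications are obtained from Theorem 4.1 together with $J_C \subseteq I(C)$, and the forward implications are the same pointwise evaluation arguments (using that every codeword of $C(\mathcal{U})$ is $c(p)$ for some $p \in X$). Your closing observation that the Type 4 evaluation argument yields the equivalence for $I(C)$ without any disjointness hypothesis is also borne out by the paper, which records exactly this as Corollary 4.4.
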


\begin{proof}
The backward directions of all the statements are true by Theorem 4.1 and the fact that $J_C \subseteq I(C)$. Therefore, it remains to prove the forward implications. We do this by essentially repeating the proofs in \cite{Grobner_bases}. \par
	{\em Modified Type 4:} Let $f_1 := x_{\sigma_1}\prod_{i \in \tau_1} (1+x_i)$,  let $f_2 = x_{\sigma_2}\prod_{j \in \tau_2} (1+x_j)$, and suppose that $f_1+f_2 \in I(C)$. Also suppose that $p \in  U_{\sigma_1} \cap (\cap_{i \in \tau_1} U^c_i)$. Then because $f_1+f_2 \in I(C)$, $f_1(c(p))+f_2(c(p)) = 0$, which implies that $f_1(c(p)) = f_2(c(p))$. But $p \in  U_{\sigma_k} \cap (\cap_{j \in \tau_k} U^c_j)$ if and only if $f_k(c(p))=1$, so $p \in  U_{\sigma_2} \cap (\cap_{j \in \tau_2} U^c_j)$. The same argument can be used to show that if $p \in  U_{\sigma_2} \cap (\cap_{j \in \tau_2} U^c_j)$, then $p \in U_{\sigma_1} \cap (\cap_{i \in \tau_1} U^c_i)$.  \par
	{\em Modified Type 5:}  Suppose $g:=x_{i_1}+...+x_{i_m} \in I(C)$. Let $\sigma \subseteq [m]$ such that $|\sigma|$ is odd, and let  $p \in \cap_{k \in \sigma} U_{i_k}$. (We can assume that $\cap_{k \in \sigma} U_{i_k} \neq \emptyset$ because the desired containment would be automatic otherwise.) Then because $g \in I(C)$, $g(c(p)) = 0 = c(p)_{i_1}+...+c(p)_{i_m} = \sum_{k \in \sigma} c(p)_{i_k} + \sum_{j \in [m] \setminus \sigma} c(p)_{i_j} = 1 +  \sum_{j \in [m] \setminus \sigma} c(p)_{i_j}$ in $\mathbb{F}_2$ since $|\sigma|$ is odd. Then $\sum_{j \in [m] \setminus \sigma} c(p)_{i_j}=1$, which implies that $c(p)_{i_j} =1$ for some $j \in [m] \setminus \sigma$, so $p \in U_{i_j}$ for this $j$. Therefore,  $\cap_{k \in \sigma} U_{i_k} \subseteq \cup_{j \in [m] \setminus \sigma} U_{i_j}$. \par
	{\em Modified Type 6:} Finally, suppose $h:= x_{i_1}+...+x_{i_m}+1$, and let $p \in X$. Since $h \in I(C)$, $0 = c(p)_{i_1}+...+c(p)_{i_m} + 1$. Since we are working in $\mathbb{F}_2$, this implies that for some $k \in [m]$, $c(p)_{i_k}=1$, and thus $p \in U_{i_k} \subseteq \cup^m_{k=1} U_{i_k}$. 
\end{proof}

	Directly from Theorems 4.1 and 4.2, we get the following corollary, which we view as the ``corrected'' Type 4-6 relations.

\begin{corollary}
Let $\mathcal{U} = \{ U_i \}_{i=1}^n$ be a collection of sets in a stimulus space $X$, and let $C = C(\mathcal{U})$ denote the corresponding receptive field code.  Then for any subsets $\sigma_1$, $\sigma_2$, $\tau_1$, $\tau_2 \subseteq [n]$ such that $\sigma_1 \cup \sigma_2$ and $\tau_1 \cup \tau_2$ are disjoint, and $m$ indices $1 \leq i_1 < i_2 < ... < i_m \leq n$, with $m \geq 2$, we have the following:
\begin{itemize}[leftmargin=6em]
\item[{\small Modified Type 4:}] {\small $x_{\sigma_1} \prod_{i \in \tau_1} (1+x_i) + x_{\sigma_2} \prod_{j \in \tau_2} (1+x_j) \in J_C 
	\Leftrightarrow x_{\sigma_1} \prod_{i \in \tau_1} (1+x_i) + x_{\sigma_2} \prod_{j \in \tau_2} (1+x_j) \in I(C) 
	\Leftrightarrow  U_{\sigma_1} \cap (\cap_{i \in \tau_1} U^c_i) = U_{\sigma_2} \cap (\cap_{j \in \tau_2} U^c_j).$ }
\item[{\small Modified Type 5:}]  $x_{i_1}+...+x_{i_m} \in J_C \Leftrightarrow x_{i_1}+...+x_{i_m} \in I(C) \Leftrightarrow$ for any $\sigma \subseteq [m]$ such that $|\sigma|$ is odd, $\cap_{k \in \sigma} U_{i_k} \subseteq \cup_{j \in [m] \setminus \sigma} U_{i_j}$.
\item[{\small Modified Type 6:}]  $x_{i_1} + ... + x_{i_m} + 1 \in J_C   \Leftrightarrow x_{i_1} + ... + x_{i_m} + 1 \in I(C) \Leftrightarrow \cup^m_{k=1} U_{i_k} = X$ whenever $U_{i_1},...,U_{i_m}$ are pairwise disjoint.
\end{itemize}
\end{corollary}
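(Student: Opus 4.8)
The plan is to observe that each of the three claimed equivalences is simply the concatenation of the corresponding equivalence in Theorem 4.1 with the one in Theorem 4.2, exploiting the fact that these two theorems share \emph{identical} receptive field conditions on their right-hand sides. For a fixed choice of $\sigma_1,\sigma_2,\tau_1,\tau_2$ (or of indices $i_1 < \dots < i_m$) satisfying the stated disjointness hypotheses, I would write $P$ for the relevant polynomial and $R$ for the relevant receptive field condition, so that all three rows become instances of a single abstract claim: $P \in J_C \Leftrightarrow P \in I(C) \Leftrightarrow R$.

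First I would invoke Theorem 4.1 to obtain $P \in J_C \Leftrightarrow R$, and then Theorem 4.2 to obtain $P \in I(C) \Leftrightarrow R$. Since both biconditionals have the same statement $R$ as one of their two sides, transitivity of $\Leftrightarrow$ immediately gives the three-way equivalence $P \in J_C \Leftrightarrow R \Leftrightarrow P \in I(C)$, which is exactly the assertion of the corollary after interchanging the middle and right terms. Carrying this out for the Modified Type 4, 5, and 6 rows is line-for-line identical, with only the substitution of the appropriate $P$ and $R$ differing between the three cases.

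There is essentially no obstacle here; the single thing worth verifying is that the hypotheses of Theorems 4.1 and 4.2 genuinely coincide, so that the two theorems apply under the same assumptions and the right-hand conditions $R$ match verbatim. They do: both require $\sigma_1 \cup \sigma_2$ to be disjoint from $\tau_1 \cup \tau_2$ for Type 4, and both require pairwise disjointness of $U_{i_1},\dots,U_{i_m}$ for Type 6, while Type 5 carries no extra hypothesis in either theorem. As a consistency check one may note that the containment $J_C \subseteq I(C)$ already forces the implication $P \in J_C \Rightarrow P \in I(C)$ for free; the genuine content packaged by the corollary is the reverse implication $P \in I(C) \Rightarrow P \in J_C$, which is precisely what the equality of the two receptive field characterizations supplies.
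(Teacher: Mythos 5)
Your proposal is correct and matches the paper exactly: the paper derives this corollary directly from Theorems 4.1 and 4.2 by chaining the two biconditionals through the shared receptive field condition, just as you do. Your observation that the only real content beyond $J_C \subseteq I(C)$ is the implication $P \in I(C) \Rightarrow P \in J_C$ is also consistent with how the paper frames the result.
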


Essentially, Corollary 4.3 tells us that we get the same information from the modified relations regardless of whether we consider $J_C$ or $I(C)$. In fact, this is true for the Type 1-3 relations as well, as shown in \cite[Lemma 4.2]{Neural_Rings}. 

As a final note, we can use Theorem 4.1 and part of the proof of Theorem 4.2 to show that, as stated in the previous section, the {\em original} Type 4 relation is if-and-only-if when $J_C$ is replaced by $I(C)$.
\begin{corollary}
Let $\mathcal{U} = \{U_i\}^n_{i=1}$ be a collection of sets in a stimulus space $X$, and let $C = C(\mathcal{U})$ denote the corresponding receptive field code. Then for any subsets $\sigma_1$, $\sigma_2$, $\tau_1$, $\tau_2 \subseteq [n]$, we have the following:
\begin{itemize}[leftmargin=5em]
	\item[Type 4:] $x_{\sigma_1} \prod_{i \in \tau_1} (1+x_i) + x_{\sigma_2} \prod_{j \in \tau_2} (1+x_j) \in I(C) \Leftrightarrow U_{\sigma_1} \cap (\cap_{i \in \tau_1} U^c_i) = U_{\sigma_2} \cap (\cap_{j \in \tau_2} U^c_j)$.
\end{itemize}
\end{corollary}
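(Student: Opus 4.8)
The plan is to prove both directions by evaluating the two pieces $f_1 := x_{\sigma_1}\prod_{i\in\tau_1}(1+x_i)$ and $f_2 := x_{\sigma_2}\prod_{j\in\tau_2}(1+x_j)$ at codewords, exploiting the fact that for a receptive field code $C = C(\mathcal{U})$ every codeword arises as $c(p)$ for some point $p\in X$. The computational heart is the evaluation identity: for any subsets $\sigma,\tau\subseteq[n]$ and any point $p\in X$,
\[
\Big(x_\sigma\textstyle\prod_{j\in\tau}(1+x_j)\Big)(c(p)) = 1 \iff p\in U_\sigma\cap\big(\textstyle\cap_{j\in\tau}U_j^c\big),
\]
and the value is $0$ otherwise. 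First I would establish this identity directly from the definition of $c(p)$: since $c(p)_i = 1$ exactly when $p\in U_i$, the factor $x_\sigma$ contributes $1$ iff $p\in U_\sigma$ and the factor $\prod_{j\in\tau}(1+x_j)$ contributes $1$ iff $p\notin U_j$ for all $j\in\tau$.

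For the forward implication ($\in I(C)\Rightarrow$ relation), I would reuse the argument given for the Modified Type 4 relation in the proof of Theorem 4.2: assuming $f_1+f_2\in I(C)$ and choosing $p\in U_{\sigma_1}\cap(\cap_{i\in\tau_1}U_i^c)$, the membership forces $f_1(c(p)) = f_2(c(p))$, and the evaluation identity then places $p$ in $U_{\sigma_2}\cap(\cap_{j\in\tau_2}U_j^c)$ as well; the symmetric choice yields the reverse containment, hence set equality. The point to emphasize is that this argument never invoked the disjointness hypothesis of Theorem 4.1.

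For the backward implication (relation $\Rightarrow\in I(C)$), I would show that $f_1+f_2$ vanishes on every codeword: given $c\in C = C(\mathcal{U})$, write $c = c(p)$ for a point $p$ in the (nonempty) region defining $c$; by the evaluation identity together with the assumed set equality, $f_1(c(p)) = f_2(c(p))$, so $(f_1+f_2)(c) = 0$. As this holds for all $c\in C$, we conclude $f_1+f_2\in I(C)$. When the disjointness hypothesis of Theorem 4.1 happens to hold this membership also follows from Theorem 4.1 together with $J_C\subseteq I(C)$, but the evaluation argument dispatches all $\sigma_1,\sigma_2,\tau_1,\tau_2$ uniformly.

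The main thing to be careful about — and the only way this statement differs from Theorem 4.1 — is the absence of a disjointness hypothesis, so I must confirm that the evaluation identity survives overlaps. If an index $i$ lies in both $\sigma_k$ and $\tau_k$, the factor $x_i(1+x_i)$ makes $f_k$ vanish at every point of $\mathbb{F}_2^n$, and correspondingly $U_{\sigma_k}\cap(\cap_{j\in\tau_k}U_j^c)$ is empty; cross-overlaps between the $\sigma$'s and $\tau$'s cause no trouble either, since each side of the identity is read off from the single polynomial $f_k$ and the single set it encodes. Thus both sides remain consistent, and no appeal to the pseudo-monomial normal form — and hence no disjointness assumption — is required.
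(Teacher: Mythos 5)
Your proof is correct, and while your forward direction coincides with the paper's (both reuse the evaluation argument from the Modified Type 4 part of Theorem 4.2), your backward direction takes a genuinely different and more uniform route. The paper proves the backward implication by a three-way case split on how the index sets overlap: when $(\sigma_1\cup\sigma_2)\cap(\tau_1\cup\tau_2)=\emptyset$ it invokes Theorem 4.1 together with $J_C\subseteq I(C)$, and in the remaining cases (some $\sigma_k$ meets some $\tau_l$) it argues that both sets $U_{\sigma_k}\cap(\cap_{j\in\tau_k}U_j^c)$ are forced to be empty, whence $f_1$ and $f_2$ each vanish on all of $C$ separately. You instead observe that every codeword of a receptive field code is $c(p)$ for some point $p$, and that the single evaluation identity $f_k(c(p))=1\iff p\in U_{\sigma_k}\cap(\cap_{j\in\tau_k}U_j^c)$ — which holds with no disjointness hypothesis, as you correctly check for the degenerate overlapping cases — immediately gives $(f_1+f_2)(c)=0$ for all $c\in C$. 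Your argument is shorter, self-contained, and avoids the heavy machinery behind Theorem 4.1 (the combinatorial cancellation and bit-flip maps) entirely. What the paper's route buys in exchange is structural information: it isolates exactly which instances of the unrestricted Type 4 relation are genuinely new beyond Theorem 4.1 (namely the degenerate overlapping ones, where both sets are empty), and in the disjoint case it certifies membership in the smaller ideal $J_C$ rather than merely in $I(C)$. Your proof establishes only what the corollary claims, but it does so cleanly and correctly.
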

\begin{proof}
We prove the forward direction using the same reasoning used to prove the forward direction of the modified Type 4 relation in Theorem 4.2. Thus, it remains to prove the backward direction. \par
To this end, assume that $U_{\sigma_1} \cap (\cap_{i \in \tau_1} U^c_i) = U_{\sigma_2} \cap (\cap_{j \in \tau_2} U^c_j)$, and consider the following three cases. \par
{\em Case 1 ($(\sigma_1 \cup \sigma_2) \cap (\tau_1 \cup \tau_2) = \emptyset$)}: If $\sigma_1 \cup \sigma_2$ and $\tau_1 \cup \tau_2$ are disjoint, then by Theorem 4.1, we know that $x_{\sigma_1} \prod_{i \in \tau_1} (1+x_i) + x_{\sigma_2} \prod_{j \in \tau_2} (1+x_j) \in J_C \subseteq I(C)$. \par
{\em Case 2 ($\sigma_1 \cap \tau_1 \neq \emptyset$ or $\sigma_2 \cap \tau_2 \neq \emptyset$)}: Assume without loss of generality that $\sigma_1 \cap \tau_1 \neq \emptyset$. This implies that $U_{\sigma_1} \cap (\cap_{i \in \tau_1} U^c_i) =  \emptyset$, which implies by hypothesis that $U_{\sigma_2} \cap (\cap_{i \in \tau_2} U^c_i) =  \emptyset$ as well. Now let $f_1 = x_{\sigma_1} \prod_{i \in \tau_1} (1+x_i)$ and $f_2 = x_{\sigma_2} \prod_{j \in \tau_2} (1+x_j)$. If $p \in X$, then $f_1(c(p)) = 1$ would imply that $p \in U_{\sigma_1} \cap (\cap_{i \in \tau_1} U^c_i) = \emptyset$, a contradiction. Since any codeword $c$ is associated with some $p \in X$, $f_1(c) = 0$ for all $c \in C$. Thus, $f_1 \in I(C)$. By the same reasoning, $f_2 \in I(C)$, and so $f_1+f_2 \in I(C)$ as well. \par
{\em Case 3 ($\sigma_1 \cap \tau_2 \neq \emptyset$ or $\sigma_2 \cap \tau_1 \neq \emptyset$)}: Assume without loss of generality that $\sigma_1 \cap \tau_2 \neq \emptyset$. This means that there is some $k \in \sigma_1 \cap \tau_2$. Then $U_k \supseteq  U_{\sigma_1} \cap (\cap_{i \in \tau_1} U^c_i) = U_{\sigma_2} \cap (\cap_{j \in \tau_2} U^c_j) \subseteq U_k^c$, which implies that  $U_{\sigma_1} \cap (\cap_{i \in \tau_1} U^c_i) = U_{\sigma_2} \cap (\cap_{j \in \tau_2} U^c_j) = \emptyset$. So by the same reasoning used in Case 2,  $x_{\sigma_1} \prod_{i \in \tau_1} (1+x_i) + x_{\sigma_2} \prod_{j \in \tau_2} (1+x_j) \in I(C)$.
\end{proof}


\section{Discussion}

In this work we proved that not only are the converses of the Type 4-6 relations in \cite{Grobner_bases} false as stated, but the converses of the Type 5 and 6 relations are also false even when the neural ideal $J_C$ is replaced by the larger ideal  $I(C)$. However, our {\em modified} versions of the Type 4-6 relations are if-and-only-if statements at the level of both $J_C$ and $I(C)$. From this we concluded that, in the case of these modified relations, $J_C$ and $I(C)$ give the same information about the stimulus space. In fact, this is true for the Type 1-3 relations as well \cite{Neural_Rings}. These observations suggest that future receptive field relationships should only involve polynomials that are in $J_C$ if and only if they are in $I(C)$. Identifying such receptive field relationships is an interesting direction for future work. 

In addition, there may be other modifications of the Type 4-6 relations that are also if-and-only-if statements. For instance, replacing $J_C$ with $I(C)$ in the original Type 4 relation also gives an if-and-only-if statement. If each of the relations could be modified in more than one way, it would be natural to ask which modifications were the ``right'' modifications; that is, which would reveal the most useful information about the corresponding receptive fields.
 
Finally, for some codes the original Type 4-6 relations are already if-and-only-if. Therefore, we can ask if all such codes have some property in common, and, conversely, we can ask if all codes with a certain property (convexity, for instance -- see \cite{Neural_Rings}) also have the property that the reverse implications of the Type 4-6 relations hold. 

\bigskip

\noindent
{\large \textbf{Acknowledgments}}  \\
This research was conducted under the mentorship of Dr. Anne Shiu in the Department of Mathematics at Texas A\&M University. The author would like to thank Dr. Shiu for all of her guidance and support, as well as Dr. Kaitlyn Phillipson for her comments on an earlier draft of this paper.



\bibliographystyle{plain}
\bibliography{MasterBibTeX}
\end{document}